\definecolor{blue}{rgb}{0,0,.7}
\definecolor{red}{rgb}{1,0,0}
\begin{document}
%
%
\pagestyle{headings}  
\addtocmark{} 
%
\mainmatter              
\title{Graphs with degree sequence \\$\{m^{m-1},n^{n-1}\}$ and $\{m^n,n^m\}$}
\titlerunning{Graphs with degree sequence \\$\{m^{m-1},n^{n-1}\}$ and $\{m^n,n^m\}$}
%
\author{Boris Brimkov$^\text{1}$, Valentin Brimkov$^\text{2}$}

\authorrunning{B. Brimkov, V. Brimkov}

\institute{
$^\text{1}$ Department of Mathematics and Statistics,\\ Slippery Rock University, Slippery Rock, PA, USA\\
\email{boris.brimkov@sru.edu}\\
\medskip
$^\text{2}$ Mathematics Department,\\ 
SUNY Buffalo State University, Buffalo, NY, USA\\
\email{brimkove@buffalostate.edu}
}

\maketitle              

\begin{abstract}
In this paper we study the class of graphs $G_{m,n}$  that have the same degree sequence as two disjoint cliques $K_m$ and $K_n$, as well as the class $\overline G_{m,n}$ of the complements of such graphs. 
We establish various properties of $G_{m,n}$ and  $\overline G_{m,n}$ related to recognition, connectivity, 
diameter,
bipartiteness,
Hamiltonicity, and pancyclicity. We also show that several classical optimization problems on these graphs are NP-hard. 
\smallskip

{\bf Keywords:} Degree-equivalent graphs, Hamiltonian graph, pancyclic graph, bipartite graph, maximum clique 
\end{abstract}

\section{Introduction}
\label{intro}

Two graphs $G_1$ and $G_2$ are called {\em degree-equivalent} if they have the same degree sequence. 
In this paper we present results about simple graphs that are different from and degree-equivalent to a disjoint union of cliques $K_{p_1},\ldots,K_{p_k}$, as well as the complements of such graphs.
We will denote this class of graphs by $G_{p_1,\dots, p_k}$ and the class of their complements by $\overline G_{p_1,\dots, p_k}$. Predominantly, we will study graphs that are degree-equivalent to two disjoint cliques and their complements, i.e., the graphs in $G_{m,n}$ and $\overline G_{m,n}$. Graphs in these families have degree sequence $\{m^{m-1},n^{n-1}\}$ and $\{m^n,n^m\}$, respectively, where $a^x$ means the number $a$ appears $x$ times in the sequence.

The graphs in $G_{m,n}$ and $\overline G_{m,n}$ can model peer-to-peer file sharing networks where, for example, each of $n$ downloaders must be connected to $m$ peers and each of $m$ uploaders must be connected to $n$ peers. They can also model networks which start out as disjoint cliques or as complete bipartite graphs, and then evolve through a sequence of 2-switches (see Section \ref{prelim} for definitions). We identify properties that are shared by all networks in such a sequence of transformations.
In particular, we show that graphs in $G_{m,n}$ and $\overline{G}_{m,n}$ can have a rather varied structure, yet possess desirable qualities such as Hamiltonicity, traceability, bounded diameter, and efficient recognizability. 
On the other hand, various optimization problems such as maximum clique, maximum independent set, and minimum vertex cover are NP-hard on these classes of graphs. 

The graphs we examine in this paper are somewhat reminiscent of {\em biregular}, {\em semiregular}, {\em almost regular}, and {\em nearly regular} graphs which have been studied in a number of works \cite{albertson,alon,joentgen,macon,nelson,nelson2}. Usually, an \emph{almost regular} graph is defined as a graph whose vertex degrees differ by at most one. 
Some variations or generalizations of this definition have been considered as well~\cite{erdos,erdos2}. 
A special example of a graph that belongs to the class $\overline G_{m,n}$ is the well-known {\em Mantel graph} \cite{mantel}; it is a graph on $n$ vertices, which is the complement of a graph that consists of two cliques of size $\lfloor n/2 \rfloor$ and $\lceil n/2 \rceil$. The graphs of two of the Platonic solids, the cube and the icosahedron, are in $G_{4,4}$ and $G_{6,6}$ respectively. The graph of the dodecahedron is in $G_{4,4,4,4}$. When $m=n$, graphs in $G_{m,n}$ and $\overline G_{m,n}$ are regular. 
However, besides such special cases, not much is known about graphs in $G_{m,n}$ and $\overline G_{m,n}$.

In the next section we introduce some definitions and previous results to be used in the paper. In Section~\ref{properties} we obtain results related to recognition, connectivity, Hamiltonicity, pancyclicity, bipartiteness, and diameter of graphs in $G_{m,n}$ and $\overline{G}_{m,n}$. In Section~\ref{NP} we explore the complexity of some optimization problems on these graphs. We conclude with final remarks and directions for future work in Section~\ref{concl}.    

\section{Preliminaries}
\label{prelim}

Let $G=(V,E)$ be a simple graph (i.e. with no loops or parallel edges) with vertex set $V$ and edge set $E$. The {\em neighborhood} of a vertex $v$, denoted $N(v)$, is the set of vertices adjacent to $v$; the {\em degree} of $v$, denoted $deg(v)$, is equal to $|N(v)|$, where by $|A|$ we denote the cardinality of a set $A$. The {\em degree sequence} of $G$ is the sequence of its vertex degrees. 
Graphs $G_1$ and $G_2$ are called {\em degree-equivalent}, denoted $G_1 \simeq G_2$, if they have the same degree sequence. 

Let $u,v,x,y$ be four vertices in a graph $G$ such that $uv$ and $xy$ are edges of $G$ and $ux$ and $vy$ are not edges of $G$. A \emph{2-switch} applied to $G$ is an operation that replaces the edges $uv$ and $xy$ with the edges $ux$ and $vy$. It is well known that the resulting graph has the same degree sequence as $G$, and that two graphs $G$ and $H$ have the same degree sequence if and only if there is a sequence of 2-switches that transforms $G$ into $H$ \cite{Fulkerson}. 

$G$ is {\em connected} if there is a path that connects any two vertices of $G$; otherwise $G$ is \emph{disconnected}. A {\em (connected) component} of $G$ is a maximal connected subgraph of $G$. $G$ is {\em separable} if it is disconnected or can be disconnected by removing a vertex, called a {\em cut-vertex}.  
A {\em bridge} (or {\em cut-edge}) of $G$ is an edge  whose removal  increases the number of components of $G$.
A {\em block} of $G$ is a maximal nonseparable  subgraph of $G$. If $G$ is not separable, then it is $2$-{\em connected} (or {\em biconnected}).

Given $S\subset V$, the {\em induced subgraph} $G[S]$ is the subgraph of $G$ whose vertex set is $S$ and whose edge set consists of all edges of $G$ which have both ends in $S$. The 
{\em complement} of $G$ is a  graph $\overline G$ on the same set of vertices such that two vertices of $\overline G$ are adjacent if and only if they are not adjacent in $G$. The {\em distance} $d(u,v)$ between vertices $u$ and $v$ in $G$ is the number of edges in a shortest path between $u$ and $v$ in $G$. The {\em diameter} of $G$ is defined as $diam(G) = \max_{u,v \in V} d(u,v)$.


\sloppypar A {\em clique} in graph $G$ is a complete subgraph of $G$ (i.e., a subgraph in which any two vertices are adjacent). A clique on $n$ vertices is denoted by $K_n$.
A {\em triangle} is a clique of size 3. 
The {\em clique number} of $G$, denoted $\omega(G)$, is the cardinality of a largest clique in $G$.
An {\em independent} (or {\em stable}) {\em set} of $G$ is a set of vertices no two of which are adjacent; 
the {\em independence} (or {\em stability}) {\em number} of $G$, denoted $\alpha(G)$, is the cardinality of a largest independent set in $G$. A {\em vertex cover} of $G$ is a set of vertices $S$ such that for every edge of $G$, at least one of its endpoints is in $S$.
{\sc Max Clique} and {\sc Max Independent Set} will respectively denote the optimization problems for finding a clique and independent set of maximum cardinality, and {\sc Min Vertex Cover} is the problem of finding a vertex cover of minimum cardinality. These are classical NP-hard problems \cite{GJ}.

A {\em bipartite} graph with \emph{parts} $V_1$, $V_2$, denoted $G=(V_1,V_2,E)$, is a graph where the sets $V_1$ and $V_2$ are nonempty independent sets of $G$ and partition the set of vertices of $G$. $K_{r_1,r_2}$ denotes the complete bipartite graph with parts of sizes $r_1$ and $r_2$. 

The {\em chromatic number} of $G$, denoted $\chi(G)$, is the smallest number of colors needed to color the vertices of $G$ so that no two adjacent vertices have the same color. A graph $G$ is {\em perfect} if the chromatic number of every induced subgraph of $G$ equals the clique number of that subgraph. 
A {\em co-graph} is a graph which does not contain as an induced subgraph the graph $P_4$ (a path on 4 vertices).

A {\em Hamiltonian cycle} in a graph $G$ is a cycle which visits every vertex of $G$ exactly once, and a {\em Hamiltonian path} is a path which visits every vertex of $G$ exactly once. A graph that contains a Hamiltonian cycle is called {\em Hamiltonian} and a graph that contains a Hamiltonian path is called {\em traceable}.   
A graph on $n$ vertices is called {\em pancyclic} if it contains cycles of every length from 3 to $n$. Thus, a pancyclic graph is also Hamiltonian.

For other graph theoretic definitions and notations, see \cite{BondyMurty}. Below we recall a number of theorems from the literature which  we will use in the sequel.  
\begin{theorem} (\cite{Astarian,konig})
A graph is bipartite if and only if all its cycles are of even degree.
\label{bipart}
\end{theorem}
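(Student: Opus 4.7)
The plan is to prove each direction separately, handling one connected component at a time (since a graph is bipartite iff each component is, and cycles lie within single components).

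For the forward direction, assume $G=(V_1,V_2,E)$ is bipartite, and let $C=v_0v_1\cdots v_{k-1}v_0$ be any cycle in $G$. Since consecutive vertices along $C$ must lie in different parts, the vertices alternate between $V_1$ and $V_2$ as we traverse $C$. In particular, $v_0$ and $v_k=v_0$ must lie in the same part, which forces $k$ to be even. This step is straightforward and should only take a few lines.

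For the converse, assume every cycle of $G$ has even length; I would show $G$ is bipartite. Without loss of generality, take $G$ connected (otherwise apply the argument to each component and take the union of the respective parts). Fix a vertex $r \in V$ and define
\[
V_1 = \{v \in V : d(r,v) \text{ is even}\}, \qquad V_2 = \{v \in V : d(r,v) \text{ is odd}\}.
\]
Clearly $V_1 \cup V_2 = V$ and $V_1 \cap V_2 = \emptyset$, and $V_1$ is nonempty (contains $r$); if $V_2$ is empty, $G$ has a single vertex and is trivially bipartite, so assume both parts are nonempty. It remains to show no edge has both endpoints in the same part.

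The main step, and where the hypothesis is essential, is the following: suppose for contradiction that $uv \in E$ with $u,v$ in the same part, i.e.\ $d(r,u)$ and $d(r,v)$ have the same parity. Let $P_u$ be a shortest $r$--$u$ path and $P_v$ a shortest $r$--$v$ path, and let $w$ be the last vertex they share (starting from $r$). Then the portions of $P_u$ and $P_v$ from $w$ to $u$ and from $w$ to $v$ are internally disjoint, have lengths $d(r,u)-d(r,w)$ and $d(r,v)-d(r,w)$ respectively, and together with the edge $uv$ form a cycle of length
\[
\bigl(d(r,u)-d(r,w)\bigr)+\bigl(d(r,v)-d(r,w)\bigr)+1.
\]
Since $d(r,u)$ and $d(r,v)$ have the same parity, this length is odd, contradicting the hypothesis. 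Hence $V_1, V_2$ is a valid bipartition.

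The main obstacle is the converse direction, specifically verifying that the portions of $P_u$ and $P_v$ after their last common vertex $w$ are internally vertex-disjoint so that we genuinely obtain a cycle (rather than a closed walk). This follows from the choice of $w$ as the \emph{last} shared vertex, together with the fact that $P_u$ and $P_v$ are shortest paths, but it deserves a careful sentence.
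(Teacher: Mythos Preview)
Your proof is correct and is the standard textbook argument (the forward direction is immediate from the alternation of parts along a walk; for the converse you use the distance-parity bipartition from a fixed root and derive an odd cycle from any monochromatic edge). The one subtlety you flag---that the segments of $P_u$ and $P_v$ past their last common vertex $w$ are internally disjoint---is handled by the choice of $w$, and your computation of the cycle length is right.

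As for comparison: the paper does not give its own proof of this theorem. It is stated in the preliminaries with citations to \cite{Astarian,konig} and used later (in the proof of Proposition~\ref{prop_bipart}) as a black box. So there is nothing in the paper to compare your argument against; your write-up is a faithful version of the classical K\H{o}nig argument that those references contain.
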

\begin{theorem} (Dirac \cite{dirac})
A simple graph $G$ with $n \geq 3$ vertices is Hamiltonian if the degree of every vertex of $G$ is greater than or equal to $n/2$. 
\label{dirac}
\end{theorem}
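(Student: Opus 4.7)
The plan is to argue by contradiction via edge‑maximal extension plus a rotation/pigeonhole trick. Assume $G$ has $n\geq 3$ vertices with $\deg(v)\geq n/2$ for every $v$, yet $G$ is not Hamiltonian.

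First I would dispose of the connectivity issue: if $G$ were disconnected, then its smallest component $C$ would satisfy $|C|\leq n/2$, so every $v\in C$ would have $\deg(v)\leq |C|-1 < n/2$, contradicting the hypothesis. So $G$ is connected. Next, enlarge $G$ by adding edges one at a time while preserving non-Hamiltonicity; this only increases degrees, so the resulting edge-maximal non-Hamiltonian graph $G^{*}$ on the same vertex set still satisfies $\delta(G^{*})\geq n/2$. Since $K_n$ is Hamiltonian for $n\geq 3$, $G^{*}$ is not complete, hence admits two non-adjacent vertices $u,v$. By maximality, $G^{*}+uv$ is Hamiltonian, and any Hamiltonian cycle of $G^{*}+uv$ must use the new edge $uv$; deleting $uv$ gives a Hamiltonian path $u=v_1,v_2,\dots,v_n=v$ in $G^{*}$.

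Now I would set up the classical rotation contradiction. Define
\[
A=\{\,i\in\{2,\dots,n\}:\, v_1 v_i\in E(G^{*})\,\},\qquad
B=\{\,i\in\{2,\dots,n\}:\, v_{i-1} v_n\in E(G^{*})\,\}.
\]
If some index $i$ lies in $A\cap B$, then $v_1 v_i$ and $v_{i-1} v_n$ are both edges, and
\[
v_1\,v_i\,v_{i+1}\cdots v_n\,v_{i-1}\,v_{i-2}\cdots v_1
\]
is a Hamiltonian cycle in $G^{*}$, contradicting the choice of $G^{*}$. Hence $A\cap B=\emptyset$. But $|A|=\deg_{G^{*}}(v_1)\geq n/2$ and $|B|=\deg_{G^{*}}(v_n)\geq n/2$, while $A\cup B\subseteq\{2,\dots,n\}$ has at most $n-1$ elements; so $n\leq |A|+|B|=|A\cup B|\leq n-1$, a contradiction.

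The main obstacle is the rotation step: one has to see that the indices ``one past a neighbor of $v_1$'' and the neighbors of $v_n$ are forced to interact, and that a coincidence between them closes the path into a cycle. Once the Hamiltonian path $v_1\cdots v_n$ is produced and the sets $A,B$ are indexed so that $A\cap B\neq\emptyset$ immediately yields a cycle, the pigeonhole finish $|A|+|B|\geq n > n-1\geq |A\cup B|$ is routine.
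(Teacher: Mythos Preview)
Your argument is the standard edge-maximal/rotation proof of Dirac's theorem and is correct as written: the maximality step produces a Hamiltonian path $v_1\cdots v_n$ with $v_1v_n\notin E(G^{*})$, the sets $A$ and $B$ are indexed so that any common element closes the path into a Hamiltonian cycle, and the pigeonhole estimate $|A|+|B|\geq n>n-1\geq |A\cup B|$ forces $A\cap B\neq\emptyset$. (The preliminary connectivity paragraph is harmless but not actually needed, since the edge-maximal extension and the subsequent contradiction go through without it.)

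As for comparison with the paper: there is nothing to compare. The paper does not prove Theorem~\ref{dirac}; it merely quotes Dirac's theorem from \cite{dirac} as a tool to be applied later (e.g.\ in Propositions~\ref{prop_twicebridge} and~\ref{complement Gnn pancyclic}). Your write-up supplies a complete, self-contained proof where the paper relies on citation, so in that sense you have done more than the paper does for this statement.
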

\begin{theorem} (Ore \cite{ore1})
A simple graph $G$ with $n \geq 3$ vertices is Hamiltonian if for every pair of non-adjacent vertices of $G$, the sum of their degrees is greater than or equal to $n$.
\label{ore}
\end{theorem}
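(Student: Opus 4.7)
The plan is to establish Ore's theorem by a contradiction argument that exploits edge-maximality. I would first suppose $G$ satisfies the degree hypothesis but is \emph{not} Hamiltonian, and then repeatedly add edges while preserving non-Hamiltonicity until reaching an edge-maximal non-Hamiltonian supergraph $G'$ on the same vertex set. Since $K_n$ is Hamiltonian for $n\geq 3$, $G'$ cannot be complete, so it has a pair of non-adjacent vertices $u,v$; because adding edges only raises degrees, the hypothesis $\deg(u)+\deg(v)\geq n$ continues to hold in $G'$.

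The next step is to exploit maximality: adjoining $uv$ to $G'$ must create a Hamiltonian cycle, so $G'$ already contains a Hamiltonian $u$--$v$ path, which I would label $v_1=u, v_2, \ldots, v_n=v$. Now comes the rotation argument, which is the heart of the proof. Define
\[
S = \{\, i : v_1 v_{i+1} \in E(G') \,\}, \qquad T = \{\, i : v_i v_n \in E(G') \,\},
\]
both viewed as subsets of $\{1,2,\ldots,n-1\}$. Since $u$ and $v$ are non-adjacent in $G'$, every neighbor of $v_1$ lies in $\{v_2,\ldots,v_{n-1}\}$ and every neighbor of $v_n$ lies in $\{v_2,\ldots,v_{n-1}\}$, from which one checks $|S|=\deg_{G'}(u)$ and $|T|=\deg_{G'}(v)$.

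The crucial claim is that $S$ and $T$ are disjoint. Indeed, if some index $i$ belonged to both, then
\[
v_1,\, v_{i+1},\, v_{i+2},\, \ldots,\, v_n,\, v_i,\, v_{i-1},\, \ldots,\, v_2,\, v_1
\]
would be a Hamiltonian cycle of $G'$: the two ``new'' edges $v_1 v_{i+1}$ and $v_i v_n$ are supplied by the definitions of $S$ and $T$, and all remaining edges come from the Hamiltonian path. This would contradict the non-Hamiltonicity of $G'$. Hence $S\cap T=\emptyset$, which forces $\deg_{G'}(u)+\deg_{G'}(v)=|S|+|T|\leq n-1$, contradicting the degree hypothesis.

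The step I expect to require the most care is the rotation bookkeeping: one must verify that a common index $i\in S\cap T$ really yields a cycle visiting each $v_j$ exactly once, and that the boundary indices cause no trouble (which follows from $uv\notin E(G')$, ensuring $i\neq 1$ and $i\neq n-1$ in the relevant memberships). Once this is settled, the contradiction reduces to a one-line pigeonhole count on subsets of an $(n-1)$-element set. As a bonus, the same argument immediately recovers Dirac's theorem (Theorem~\ref{dirac}) as the special case in which every vertex has degree at least $n/2$.
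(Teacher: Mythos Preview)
Your argument is the standard and correct proof of Ore's theorem via an edge-maximal counterexample and the rotation/crossover trick; the bookkeeping on $S$ and $T$ is fine once one notes $S\cup T\subseteq\{1,\ldots,n-1\}$.

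There is nothing to compare against, however: in the paper this theorem is stated in the Preliminaries section as a cited classical result (Ore~\cite{ore1}) and is used as a black box, with no proof given. So your write-up is not an alternative to the paper's proof but rather a self-contained justification of a result the paper merely quotes.
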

\begin{theorem} (Holton and Sheehan \cite{holton_sheehan}) 
If $G$ is a $2$-connected $r$-regular graph with at most $3r + 1$ vertices, 
then $G$ is Hamiltonian or $G$ is the Petersen graph.
\label{3r+1}
\end{theorem}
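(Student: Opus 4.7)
My plan is to argue by contradiction: suppose $G$ is a 2-connected $r$-regular graph on $n \leq 3r+1$ vertices that is neither Hamiltonian nor the Petersen graph, and derive a contradiction. Since $G$ is 2-connected we have $r \geq 2$, and the range $n \leq 2r$ is already covered by Dirac's theorem (Theorem \ref{dirac}), so the interesting regime is $2r < n \leq 3r+1$.

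I would first dispose of small regularities. For $r = 2$, $G$ is itself a cycle, hence Hamiltonian. For $r = 3$ the hypothesis forces $n \leq 10$, and I would rely on a finite enumeration of 2-connected cubic graphs on at most ten vertices, using the well-known fact that the Petersen graph is the unique such non-Hamiltonian example (a consequence of its girth-5 Moore-graph structure). These base cases handle the boundary where the conclusion is sharp.

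For the main case $r \geq 4$, I would take a longest cycle $C = v_1 v_2 \cdots v_\ell v_1$ with $\ell < n$, pick a vertex $x \notin V(C)$, and use 2-connectedness and Menger's theorem to produce two internally disjoint paths from $x$ to $C$. A P\'osa-style rotation argument then shows that if $v_{i_1}, \ldots, v_{i_k}$ are the attachment points on $C$ of an $x$-rooted path, the successor set $\{v_{i_j+1}\}$ must be independent in $G$ and no $v_{i_j+1}$ may be adjacent to $x$; otherwise one constructs a cycle longer than $C$. Combining this with $r$-regularity, by double-counting edges between $V(C)$ and its complement, gives an inequality of the rough shape $n \geq \ell + f(r)$ which, juxtaposed with $n \leq 3r+1$, pins down $\ell$, $n$, and $r$ to a very narrow window. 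A detailed structural analysis of the extremal configurations in that window either yields a longer cycle (contradicting the choice of $C$) or identifies $G$ as the Petersen graph (excluded by assumption).

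The main obstacle is the tightness of this extremal analysis for $r \geq 4$: the P\'osa counting must be sharp enough to exclude every configuration other than the Petersen graph, and this demands carefully tracking the interaction of multiple $x$-to-$C$ paths, their attachment points, and the $C$-segments between them. The fact that the bound $3r+1$ is actually attained (by the Petersen graph at $r = 3$) warns that the argument is delicate at the boundary, and explains why the exception in the statement is unavoidable.
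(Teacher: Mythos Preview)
The paper does not prove this theorem. It is stated in the Preliminaries section as a known result, cited from Holton and Sheehan's monograph on the Petersen graph, and is used later only as a black box (in the proof of Proposition~\ref{lemma1}). Consequently there is no ``paper's own proof'' against which to compare your proposal.

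As for your outline itself: the broad strategy---take a longest cycle, attach an off-cycle vertex via two paths from 2-connectedness, and use P\'osa-type rotation and counting---is indeed the standard machinery in this area. However, the honest difficulty of the theorem lies almost entirely in the part you describe as ``a detailed structural analysis of the extremal configurations.'' The actual proof (the history runs through work of Erd\H{o}s--Hobbs, Bollob\'as--Hobbs, Jackson, and Zhu--Liu--Yu before the sharp $3r+1$ bound was established) is long and case-intensive; the counting inequality you sketch does not by itself squeeze the configuration down to the Petersen graph, and the $r=3$ base case cannot simply be read off from the Moore-graph property. So your proposal is a reasonable high-level plan, but it glosses over the substance of the argument, and in any case the paper makes no attempt to supply one.
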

\begin{theorem} (Rahman and Kaykobad \cite{rahman-Kaykobad})
A simple graph with $n$ vertices has a Hamiltonian path if for every two non-adjacent vertices the sum of their degrees and the distance between them is greater than $n$.
\label{rahman-Kaykobad}
\end{theorem}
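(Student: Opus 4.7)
The plan is to proceed by contradiction. Assume $G$ on $n$ vertices satisfies the hypothesis but has no Hamiltonian path, and let $P = v_1 v_2 \cdots v_k$ be a longest path in $G$. Then $k < n$, and the maximality of $P$ forces every neighbor of $v_1$ and of $v_k$ to lie in $V(P)$, for otherwise $P$ could be extended. A short preliminary step verifies that $G$ must be connected under the natural interpretation that $d(u,v)$ is finite (otherwise the hypothesis is vacuous on disconnected pairs, yet the conclusion fails), so there exists a vertex $w \notin V(P)$ with a neighbor $v_j \in V(P)$.

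Next I would apply the standard Ore-style index counting at the endpoints $v_1, v_k$. Set $I = \{i : v_1 v_{i+1} \in E\}$ and $J = \{j : v_j v_k \in E\}$; both are subsets of $\{1, \ldots, k-1\}$ with $|I| = \deg(v_1)$ and $|J| = \deg(v_k)$. If some $i$ lies in $I \cap J$, then $v_1 v_2 \cdots v_i v_k v_{k-1} \cdots v_{i+1} v_1$ is a cycle $C$ through $V(P)$, and cutting $C$ at an appropriate edge incident to $v_j$ and appending $w$ yields a path on $k+1$ vertices, contradicting the maximality of $P$. The sub-case $v_1 v_k \in E$ is handled the same way with $P$ itself closing. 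Hence one may assume $I \cap J = \emptyset$ and $v_1 v_k \notin E$, which forces $\deg(v_1) + \deg(v_k) \leq k - 1$ and, via the hypothesis, $d(v_1, v_k) \geq n - k + 2$.

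The main obstacle is this remaining case, where the trivial upper bound $d(v_1, v_k) \leq k - 1$ along $P$ gives only $2k \geq n + 3$, which is not yet a contradiction. I would then examine a shortest $v_1$-$v_k$ path $Q$: since $v_1$ and $v_k$ have no neighbors outside $V(P)$, the endpoints of the first and last edges of $Q$ lie in $V(P)$, and $Q$ is strictly shorter than the $v_1$-$v_k$ subpath of $P$, so it provides a genuine shortcut. The plan is to splice $P$ with $Q$ via a P\'osa-style rotation, rerouting the endpoints of $P$ until the outside vertex $w$ can be attached, thereby producing a path strictly longer than $P$. Verifying that this rotation can always be carried out, and that the distance bound $d(v_1, v_k) \geq n - k + 2$ leaves precisely the room required to incorporate a vertex outside $V(P)$, is the delicate step on which the argument hinges.
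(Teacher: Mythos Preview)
The paper does not prove this statement; it appears in the Preliminaries section as a result quoted from Rahman and Kaykobad (2005), with no proof given. So there is no proof in the paper to compare against.

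Regarding your attempt itself: the argument is incomplete, and the gap is exactly where you say it is. Having reached $\deg(v_1)+\deg(v_k)\le k-1$ and $d(v_1,v_k)\ge n-k+2$, the promised ``splice $P$ with $Q$ via a P\'osa-style rotation'' is not carried out, and it is not clear that it can be. A shortest $v_1$--$v_k$ path $Q$ may lie entirely inside $V(P)$ (indeed its first and last edges must, since $v_1$ and $v_k$ have no neighbours off $P$); a shortcut \emph{within} $V(P)$ does not by itself let you attach the outside vertex $w$, and your sketch gives no concrete structural feature of $Q$ that would make this possible. The inequality $2k\ge n+3$ you derive is compatible with many configurations that do not obviously yield a longer path. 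A more natural line is to apply the hypothesis to the pair $(w,v_1)$ or $(w,v_k)$ --- since $w$ is non-adjacent to every endpoint reachable by rotation --- and to combine this with the fact that $w$ cannot be adjacent to two consecutive vertices of $P$; but whichever route one takes, the crucial counting must be done explicitly, and in your proposal it is not.
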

\begin{theorem} (Bondy \cite{bondy})
Any Hamiltonian graph with $n$ vertices and  at least $n^2/4$ edges is either pancyclic or is the graph $K_{n/2,n/2}$.
\label{bondy}
\end{theorem}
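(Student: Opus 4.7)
My plan is to prove Bondy's theorem by reducing to the triangle-free case via Mantel's theorem and then treating the remaining case by induction on $n$. If $G$ is triangle-free, Mantel's theorem bounds $|E(G)| \leq n^2/4$ with equality forcing $G = K_{n/2,n/2}$; combined with the hypothesis $|E(G)| \geq n^2/4$, this identifies $G$ as the exceptional graph in the statement. So I may assume $G$ contains a triangle, and the task reduces to showing $G$ is pancyclic.

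For the inductive step (small base cases being routine), fix a Hamiltonian cycle $C = v_0 v_1 \ldots v_{n-1} v_0$. Each chord $e = v_i v_j$ carries a natural \emph{length} $\ell(e) = \min(|i-j|, n-|i-j|)$; together with one of the two arcs of $C$ it creates a cycle of length $\ell(e)+1$, and with the other arc a cycle of length $n-\ell(e)+1$. The plan is to show that chords realizing every length $1 \leq \ell \leq \lfloor n/2 \rfloor$ exist, which immediately yields cycles of every length from $3$ to $n$. An alternative inductive route is to locate a vertex $v$ with $\deg(v) \leq n/2$ such that $G-v$ is Hamiltonian and still has at least $(n-1)^2/4$ edges, then apply the inductive hypothesis to $G-v$ and combine the resulting cycles of lengths $3$ through $n-1$ with $C$ itself. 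The triangle assumption is useful for rerouting $C$ through an edge like $v_{i-1}v_{i+1}$ when removing $v = v_i$, so that the Hamiltonicity of $G-v$ can be salvaged.

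The principal difficulty is the tightness at $|E(G)| = n^2/4$: since $K_{n/2,n/2}$ is Hamiltonian, has exactly $n^2/4$ edges, and admits no odd cycle, the argument must be delicate enough to separate the ``missing odd length'' case (which corresponds precisely to $K_{n/2,n/2}$) from any configuration in which a single cycle length happens to be absent. Concretely, if some length $k$ is evaded, then by the chord-length correspondence above no chord of $C$ can have length $k-1$ or $n-k+1$; a careful count on the remaining admissible chord lengths must then be shown to contradict $|E(G)| \geq n^2/4$ whenever $G \neq K_{n/2,n/2}$. Handling this boundary --- in particular, distinguishing the bipartite extremal from near-bipartite competitors that nevertheless contain a triangle --- is where I expect to spend the bulk of the technical effort.
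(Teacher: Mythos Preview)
The paper does not prove this theorem at all: it is quoted in the Preliminaries section as a known result of Bondy (1971) and is later invoked as a black box in the proof of Proposition~\ref{complement Gnn pancyclic}. There is therefore no proof in the paper to compare your proposal against.

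As for the proposal itself, it is a plan rather than a proof, and both routes you sketch have real gaps. The chord-length argument is not correct as stated: the bound $|E(G)|\ge n^2/4$ does \emph{not} force chords of every length $1,\dots,\lfloor n/2\rfloor$ to appear along a fixed Hamiltonian cycle (one can concentrate many chords in a few lengths), so ``show that chords realizing every length exist'' is not a consequence of the edge count alone. Your second route --- delete a vertex $v$ of degree at most $n/2$ lying on an $(n-1)$-cycle and induct --- is essentially Bondy's actual strategy, but the whole difficulty is the lemma you have not proved: that a Hamiltonian graph with at least $n^2/4$ edges contains an $(n-1)$-cycle unless it is $K_{n/2,n/2}$. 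You correctly flag this boundary case as the crux and then stop; that is exactly the step where Bondy's counting argument (pairing potential chords $v_1v_{k+1}$ and $v_kv_n$ along the cycle and showing they cannot both be present if there is no $(n-1)$-cycle) does the work. Until that lemma is established, neither branch of your plan closes.
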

\begin{theorem} (Moon and Moser \cite{MoonMoser})
If the bipartite graph $G(U,V,E)$, $|U|=|V|=n$ is such that for every $k$, where $1< k< \frac{n}{2} $, the number of vertices $p \in U$ such that $deg(p) < k$ is less than $k$, and similarly with $p$ replaced by $q \in V$, then $G(U,V,E)$ is Hamiltonian.
\label{MoonMoser}
\end{theorem}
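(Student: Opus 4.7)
The plan is to adapt the classical extremal argument used in the proofs of Ore's and Chv\'atal's Hamiltonicity theorems to the bipartite setting. Assume for contradiction that a bipartite graph $G=(U,V,E)$ with $|U|=|V|=n$ satisfies the hypothesis but has no Hamiltonian cycle. By iteratively adding edges between $U$ and $V$ while preserving non-Hamiltonicity, I would obtain an edge-maximal bipartite counterexample $G^*$. Since adding edges never decreases degrees, $G^*$ still satisfies the Moon--Moser condition. Among all non-adjacent pairs $(u,v)$ with $u\in U$, $v\in V$, pick one that maximizes $\deg(u)+\deg(v)$ and call it $(u_1,v_n)$. By the edge-maximality of $G^*$, the graph $G^*+u_1v_n$ is Hamiltonian, which forces $G^*$ to contain a Hamiltonian path from $u_1$ to $v_n$. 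Since $G^*$ is bipartite with equal parts, this path alternates between $U$ and $V$ and can be written as
\[
u_1,v_1,u_2,v_2,\ldots,u_n,v_n.
\]

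Next I would apply the standard chord argument in bipartite form: if $u_1\sim v_j$ and $v_n\sim u_j$ held simultaneously for some $j\in\{2,\ldots,n\}$, then
\[
u_1,v_j,u_{j+1},v_{j+1},\ldots,u_n,v_n,u_j,v_{j-1},u_{j-1},\ldots,v_1,u_1
\]
would be a Hamiltonian cycle in $G^*$, a contradiction. Hence the index sets $A=\{j:u_1\sim v_j\}$ and $B=\{j:v_n\sim u_j\}$ are disjoint, with $A\subseteq\{1,\ldots,n-1\}$ and $B\subseteq\{2,\ldots,n\}$. This gives the Ore-type bound $\deg(u_1)+\deg(v_n)\le n$. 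Setting $k=\deg(u_1)$ and, by the symmetric roles of $U$ and $V$, assuming $k\le n/2$, we get $\deg(v_n)\le n-k$.

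The third and decisive step uses the extremal choice of $(u_1,v_n)$. For any $u_i\in U$ not adjacent to $v_n$, the pair $(u_i,v_n)$ is non-adjacent, so
\[
\deg(u_i)+\deg(v_n)\le \deg(u_1)+\deg(v_n),
\]
yielding $\deg(u_i)\le k$. The number of such $u_i$ is $n-\deg(v_n)\ge k$, so $U$ contains at least $k$ vertices of degree at most $k$ (including $u_1$ itself). A parallel argument applied to the pair of vertices that are not neighbors of $u_1$ in $V$ produces a lower bound on the number of low-degree vertices in $V$. Applying the Moon--Moser hypothesis at the appropriate threshold (namely, with parameter $k+1$, which sits in the allowed range whenever $1<k+1<n/2$) then forces strictly fewer low-degree vertices than we have just produced, giving the desired contradiction.

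The main obstacle I anticipate is the careful bookkeeping of boundary cases $k\le 1$ and $k\ge \lceil n/2\rceil-1$, which are not directly covered by the allowed range $1<k<n/2$, together with the translation between the strict inequality $\deg(p)<k$ in the hypothesis and the non-strict bound $\deg(u_i)\le k$ produced by the extremal argument. These edge cases are handled by short separate analyses (for instance, when $k=1$ the vertex $u_1$ has a single neighbor, which combined with the Hamiltonian path structure forces an immediate structural contradiction), but they require more delicate accounting than the core chord-and-extremality argument itself.
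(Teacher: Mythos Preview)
The paper does not prove this statement at all: Theorem~\ref{MoonMoser} is quoted from the literature (Moon and Moser \cite{MoonMoser}) and used as a black box in the proof of Theorem~\ref{bipartite}. There is therefore no ``paper's own proof'' to compare your proposal against.

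That said, your sketch follows the standard extremal/closure strategy one uses for Chv\'atal-type Hamiltonicity criteria, adapted to the balanced bipartite setting, and this is indeed essentially how the Moon--Moser theorem is proved. The chord argument and the degree-sum bound $\deg(u_1)+\deg(v_n)\le n$ are correct. The place where your write-up is still genuinely incomplete is exactly the one you flag: reconciling the non-strict bound $\deg(u_i)\le k$ coming out of the extremal argument with the strict inequality $\deg(p)<k$ in the hypothesis, and handling the endpoints of the range $1<k<n/2$. In the usual presentation one orders the degrees $d_1\le\cdots\le d_n$ in each part and shows that a maximal non-Hamiltonian bipartite graph must satisfy $d_k\le k$ and $e_{n-k}\le n-k$ (the $e_j$ being the sorted degrees in the other part) for some $k\le n/2$, then invokes the hypothesis at that specific $k$; the shift-by-one issue you anticipate does not actually arise once the counting is phrased this way. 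If you want a complete argument rather than a sketch, consult the original Moon--Moser paper or any exposition of the bipartite analogue of Chv\'atal's theorem.
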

\begin{theorem} (Caro and Wei \cite{CaroWei})
A simple graph with $n$ vertices and degree sequence $d_1, d_2, \dots, d_n$ has an independence number $\alpha(G) \geq \sum_{i=1}^n \frac{1}{d_i+1}$. 
\label{caro}
\end{theorem}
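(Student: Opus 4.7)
The plan is to prove this via a short probabilistic argument, which produces not only the bound but an explicit random procedure for obtaining an independent set of the desired expected size.

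First, I would place a uniformly random total order $\prec$ on the vertex set $V$, i.e., choose a permutation of $V$ uniformly at random among all $n!$ such permutations. Define a random subset
\[
S = \{\, v \in V : v \prec u \text{ for every } u \in N(v) \,\},
\]
that is, $S$ contains exactly those vertices that come strictly before all of their neighbors in the chosen ordering.

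Next I would verify that $S$ is always an independent set of $G$. If $u,v \in S$ were adjacent, then the definition of $S$ applied to $u$ would force $u \prec v$, and applied to $v$ would force $v \prec u$, a contradiction. Hence $S$ is independent with probability $1$.

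The key computation is to find $\Pr[v \in S]$ for a fixed vertex $v$ of degree $d(v)$. Restricted to the set $\{v\} \cup N(v)$ of size $d(v)+1$, the induced ordering is uniformly distributed over all $(d(v)+1)!$ permutations, so each of these vertices is equally likely to be the minimum; therefore $\Pr[v \in S] = 1/(d(v)+1)$. By linearity of expectation,
\[
\E[|S|] = \sum_{v \in V} \Pr[v \in S] = \sum_{i=1}^{n} \frac{1}{d_i+1}.
\]
Since $|S|$ must attain at least its expectation for some outcome, there exists an independent set of size at least $\sum_{i=1}^n 1/(d_i+1)$, giving $\alpha(G) \geq \sum_{i=1}^n 1/(d_i+1)$. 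There is no serious obstacle here; the only point that requires care is the symmetry argument identifying $\Pr[v \in S]$ with $1/(d(v)+1)$, which relies on the fact that restricting a uniformly random permutation of $V$ to a fixed subset yields a uniformly random permutation of that subset.
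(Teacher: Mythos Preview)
Your proof is correct and is precisely the standard probabilistic argument for the Caro--Wei bound. However, note that the paper does not actually prove this statement: it is listed in the Preliminaries section as a known result from the literature (cited to \cite{CaroWei}) and is later invoked as a black box in the proof of Proposition~\ref{prop_bipart}. So there is no ``paper's own proof'' to compare against; your argument simply supplies what the paper omits by citation.
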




\section{Properties of $G_{m,n}$ and $\overline G_{m,n}$}
\label{properties}

\subsection{Recognition}

 
Mantel and Tur\'an graphs are complete bipartite and $r$-partite graphs, respectively.
They are co-graphs and as such are perfect. 
The graphs in $G_{m,n}$ and $\overline G_{m,n}$  are neither of these, in general. 
For example, the graph from $G_{3,4}$ in Figure~\ref{fig1}, left, contains $C_5$ and $P_4$ as induced subgraphs and therefore is neither a co-graph nor perfect. The graph from the same family $G_{3,4}$ in Figure~\ref{fig1}, right, contains $P_4$ as an induced subgraph and therefore is not a co-graph, but it is perfect. These observations suggest that solving some optimization problems on graphs in $G_{m,n}$ may be hard, as we will show in Section~\ref{NP}. 

\begin{figure}
\begin{center}
\includegraphics[scale=0.45]{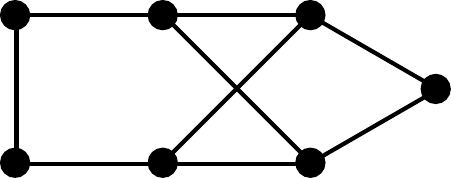}\qquad \qquad \qquad \qquad
\includegraphics[scale=0.45]{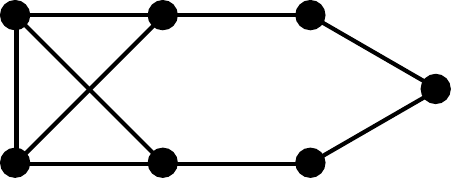}
\end{center}
\caption{Both graphs belong to the class $G_{3,4}$. {\em Left:} The graph is neither a co-graph nor perfect.
{\em Right:} The graph is not a co-graph, but it is perfect.}
\label{fig1}
\end{figure}


It is well-known that bipartite graphs (such as Mantel graphs) as well as complete $r$-partite graphs (such as Tur\'an graphs) can be recognized in polynomial time,
while recognizing incomplete $r$-partite graphs is NP-complete for $r > 2$. 
The following statement shows that $G_{p_1,\dots,p_k}$ graphs and their complements are easily recognizable, even though they could be incomplete $r$-partite for arbitrarily large $r$, and do not belong to any of the known efficiently recognizable classes of graphs.

\begin{proposition}
It can be checked in linear time whether $G \in G_{p_1,\dots,p_k}$.
\label{prop0}
\end{proposition}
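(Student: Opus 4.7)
The plan is to show that membership in $G_{p_1,\dots,p_k}$ reduces to two linear-time tests on the input graph $G=(V,E)$: (i) for each $i$, exactly $p_i$ vertices of $G$ have degree $p_i-1$, and (ii) $G$ itself is not the disjoint union of cliques $K_{p_1}\cup\cdots\cup K_{p_k}$. By the definition of $G_{p_1,\dots,p_k}$ these two conditions are jointly necessary and sufficient, so any algorithm that checks them both correctly decides membership.

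First I would compute $deg(v)$ for every $v\in V$ in a single scan over the adjacency lists, tally the degrees into a counting array indexed by degree value, and verify that exactly $p_i$ vertices attain degree $p_i-1$ for each $i$. This costs $O(|V|+|E|)$ time; if the tally fails to match the target multiset, the algorithm returns ``no.''

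Assuming the degree sequence matches, I would next run BFS (or DFS) from each unvisited vertex to partition $V$ into its connected components, again in $O(|V|+|E|)$ time. For each component $C$, the algorithm checks whether every vertex of $C$ has degree $|C|-1$, or equivalently whether $|E(G[C])|=\binom{|C|}{2}$. If \emph{every} component is complete, then $G$ is exactly a disjoint union of cliques, and since the degree multiset has already been verified, the component sizes must form the multiset $\{p_1,\dots,p_k\}$; hence $G=K_{p_1}\cup\cdots\cup K_{p_k}$ and the algorithm returns ``no.'' If \emph{some} component fails the completeness test, then $G$ contains two non-adjacent vertices lying in a common component, so $G\ne K_{p_1}\cup\cdots\cup K_{p_k}$ and the algorithm returns ``yes.''

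Both stages are $O(|V|+|E|)$, matching the adjacency-list size of $G$, so the whole procedure is linear. There is no substantive technical obstacle: the only point requiring a brief justification is the correctness of the second stage, which rests on the standard observation that a graph is a disjoint union of cliques if and only if each of its connected components is complete, together with the fact that once the degree sequence has been pinned down, the component sizes of such a graph are forced to be the multiset $\{p_1,\dots,p_k\}$.
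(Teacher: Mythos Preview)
Your argument is correct, but it differs from the paper's in two respects.

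First, the paper reads the proposition as a pure recognition problem in which the tuple $(p_1,\dots,p_k)$ is \emph{not} part of the input: given only $G$, decide whether $G$ lies in $G_{p_1,\dots,p_k}$ for \emph{some} tuple. Its test is that for every degree value $p$ occurring in $G$, the number of vertices of degree $p$ is an integer multiple of $p+1$; this characterizes degree sequences of disjoint unions of cliques and is verified with one counting-sort pass after computing the degree sequence. You instead take the $p_i$ as given and match the degree multiset exactly. Both are linear; the paper's version has the advantage of not needing the $p_i$ as input, while yours is the more literal reading of the statement as written.

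Second, you explicitly enforce the ``different from $K_{p_1}\cup\cdots\cup K_{p_k}$'' clause by a BFS/DFS component scan and a completeness check on each component. The paper's proof does not address this clause at all; your treatment is therefore more careful on that point, at no asymptotic cost.
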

\proof 
For a graph $G=(V,E)$ with $|V|=n$ and $|E|=m$, the degree sequence of $G$ can be found in $O(m)$ time, and the cardinality of each number in the degree sequence can be found in $O(n)$ time using Counting Sort. Then, it can be checked in $O(n)$ time whether the number of vertices with degree $p$ is an integer multiple of $p+1$. This is true if and only if $G\in G_{p_1,\dots,p_k}$.
\qed
\medskip

By a similar reasoning as above, it can be checked in linear time whether $G \in \overline{G}_{p_1,\dots,p_k}$.



\subsection{Connectivity, Hamitonicity, and pancyclicity}
\label{cut vertices}

\begin{figure}
\begin{center}
\includegraphics[scale=0.45]{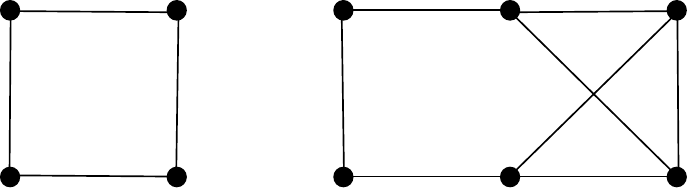}\qquad \qquad \qquad \qquad
\includegraphics[scale=0.45]{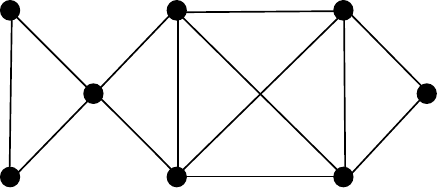}
\end{center}
\caption{{\em Left:} A graph in $G_{3,3,4}$ that is not connected.
{\em Right:} A graph in $G_{3,5}$ that is not 2-connected.}
\label{fig2}
\end{figure}

A graph that is degree-equivalent to more than two cliques does not need to be connected (see Figure~\ref{fig2}, left).   
However, the following proposition shows that a graph that is degree-equivalent to two cliques is always connected.
\begin{proposition}
Any graph $G \in G_{m,n}$ is connected. 
\label{prop00}
\end{proposition}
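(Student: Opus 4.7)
The plan is to argue by contradiction: I would assume $G\in G_{m,n}$ is disconnected and deduce that $G$ must equal $K_m\cup K_n$, contradicting the requirement in the definition of $G_{m,n}$ that graphs in this class be \emph{different from} the disjoint union of the two cliques. The whole proof rests on the simple observation that, in any component $C$ of $G$, every vertex has its full neighborhood inside $C$; hence a component containing a vertex of degree $m-1$ must have at least $m$ vertices, and one containing a vertex of degree $n-1$ must have at least $n$ vertices. Assume without loss of generality that $m\le n$, and let $C_1,\dots,C_k$ with $k\ge 2$ be the components of $G$.

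I would split into two cases. \emph{Mixed case:} some component $C_i$ contains vertices of both degrees. Then $|V(C_i)|\ge n$, so the remaining components together hold at most $m$ vertices. Any such remaining component containing a degree-$(n-1)$ vertex would itself need $\ge n > m$ vertices, which is impossible; thus each other component contains only degree-$(m-1)$ vertices and has size $\ge m$. The at-most-$m$ budget combined with the $\ge m$ per-component lower bound forces exactly one other component, of size exactly $m$, which then accounts for all $m$ degree-$(m-1)$ vertices of $G$. But this leaves none available for $C_i$, contradicting the assumption that $C_i$ is mixed. \emph{Pure case:} every component contains vertices of only one degree. Then the $m$ vertices of degree $m-1$ are distributed among components each requiring at least $m$ of them, forcing a single such component of size exactly $m$ which is necessarily $(m-1)$-regular on $m$ vertices, hence $K_m$. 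The same reasoning gives a single second component isomorphic to $K_n$, so $G = K_m\cup K_n$, again a contradiction.

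The main subtlety, and essentially the only step requiring care, is the bookkeeping in the mixed case: one must balance the lower bound $|V(C_i)|\ge n$ against the global vertex count $m+n$ and the per-component size bounds in order to rule out every alternative disconnected configuration. The concluding observation that a connected $d$-regular graph on $d+1$ vertices is $K_{d+1}$ is immediate and needs no separate argument.
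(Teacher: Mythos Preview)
Your proof is correct. Both your argument and the paper's rest on the same elementary observation---that a component containing a vertex of degree $d$ must have at least $d+1$ vertices---but you organize the case analysis differently. The paper picks a component $G_1$ of \emph{minimum} order and cases on whether $|V(G_1)|<m$, $=m$, or $>m$; minimality then quickly forces a contradiction in the first two cases and, in the third, shows that all degree-$(n-1)$ vertices must lie in $G_1$, again contradicting minimality. You instead split on whether some component is ``mixed'' (contains both degrees) or every component is ``pure,'' and in the mixed case do the explicit vertex-budget bookkeeping to reach a contradiction. Your decomposition is arguably more transparent about where each contradiction originates, while the minimality trick in the paper is a touch shorter since it avoids the separate mixed-case accounting. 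Either way the argument is brief and entirely elementary.
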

\proof
$G$ has $m$ vertices of degree $m-1$ and $n$ vertices of degree $n-1$. 
Suppose that $G$ has at least two components, and let $G_1=(V_1,E_1)$ be a component of minimum order.
If $|V_1| < m$, then $G_1$ has vertices of degree less than $m-1$, which is a contradiction.
If $|V_1| > m$, then all other components must be of order less than $n$.
Hence, all the degree $n-1$ vertices of $G$ must be in $G_1$.
This means that $G$ has order at least $n$, which contradicts the minimality of $G_1$. 
If $|V_1|=m$, then all vertices of $V_1$ must be of degree $m-1$, which is possible only if $G_1=K_m$. 
Let $G_2$ be the graph consisting of the other components of $G$. $G_2$ has $n$ vertices each of degree $n-1$, which is possible only if $G_2=K_n$. Then $G=K_m \cup K_n$, which contradicts the definition that $K_m \cup K_n \not\in G_{m,n}$.  
\qed

\medskip


A graph in $G_{m,n}$ does not need to be 2-connected as it may have a cut-vertex (see Figure~\ref{fig2}, right). Below we characterize all the possible numbers of cut-vertices and bridges that a graph in $G_{m,n}$ could have.

\begin{figure}
\begin{center}
\includegraphics[scale=0.42]{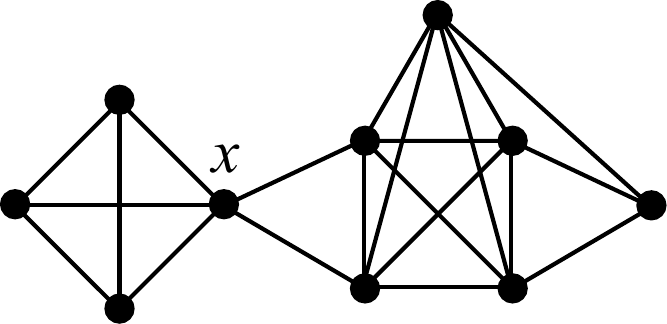}\qquad \qquad \qquad 
\includegraphics[scale=0.42]{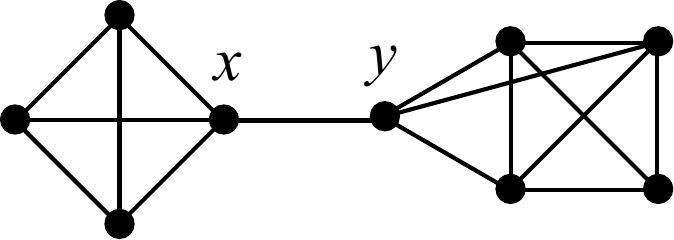}
\end{center}
\caption{{\em Left:} A graph in $G_{4,6}$ with one cut-vertex.
{\em Right:} A graph in $G_{4,5}$ with two cut-vertices and a bridge between them.}
\label{Fig33}
\end{figure}

\begin{proposition}
A graph $G \in G_{m,n}$ must have one of the following:
\begin{enumerate}
\item No cut-vertices and no bridges
\item One cut-vertex and no bridges
\item Two cut-vertices and one bridge
\item Two cut-vertices and two bridges
\item Three cut-vertices and four bridges.
\end{enumerate}
Moreover, if $G$ has at least one cut-vertex, then it is traceable.
\label{bridge_cut-vertex}
\end{proposition}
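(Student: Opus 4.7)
The plan is, assuming without loss of generality $m \le n$, to fix a cut-vertex $v$ of $G$ and analyze the components $C_1,\dots,C_k$ of $G-v$ using the rigidity of the degree sequence $\{m^{m-1},n^{n-1}\}$. For every $u\in C_i$ one has $\deg_G(u)=\deg_{C_i}(u)+[u\sim v]\le |C_i|$, while $\deg_G(u)\in\{m-1,n-1\}$; coupled with the fact that $v$ has at least one neighbor in every $C_i$, this forces each $|C_i|$ into a very short list of allowed values (essentially $\{m-1,m,n-1,n\}$, with $C_i$ being ``low'' if all its vertices have degree $m-1$ in $G$, and ``high'' if it contains a vertex of degree $n-1$). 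Adding the constraints $\sum_i|C_i|=m+n-1$ and $\deg(v)\in\{m-1,n-1\}$, only a handful of size-profiles for $(|C_1|,\dots,|C_k|)$ survive.

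Next I would prove a structural sublemma that pins down the induced graph on each $C_i$ together with its adjacency to $v$. The typical outcome is that each $C_i$ is either a clique $K_p$ whose every vertex is joined to $v$ (so that $\{v\}\cup C_i$ essentially forms a $K_{p+1}$), or a $K_p$ with a small matching removed between the vertices of $C_i$ not adjacent to $v$, in which case the attachment to $v$ is through very few edges and, generically, a bridge. The sublemma is proved by a local degree-counting argument in each $C_i$: a vertex of $C_i$ not adjacent to $v$ must be joined to \emph{every} other vertex of $C_i$, while a vertex adjacent to $v$ has exactly one in-component non-neighbor, whose identity is pinned down. Once the shape of each $C_i$ and the number of $v$-edges (and of bridges) to each $C_i$ are known, iterating the same analysis at any further cut-vertex $v'$ narrows the list of admissible global configurations to exactly the five enumerated: case 1 (no cut-vertex), case 2 (the two near-cliques glued at $v$), cases 3 and 4 (two cut-vertices joined through one or two bridges between the two near-cliques), and case 5 (a small mediating block producing three cut-vertices and four bridges).

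Finally, traceability follows by concatenating local Hamiltonian paths. Each near-clique block $K_p$ has a Hamiltonian path between any two prescribed endpoints, and each $K_p$ minus a matching has one between any two endpoints not paired by the missing matching; in every configuration the sublemma tells us exactly which vertices are incident to the cut-vertex(es) and bridge(s), so we can pick compatible endpoints in each block and glue the paths. The main obstacle throughout is the combinatorial bookkeeping of the sublemma --- ruling out ``intermediate'' component sizes, controlling exactly which vertices of each $C_i$ are adjacent to $v$, and, in the multi-cut-vertex setting, showing that no configurations other than cases 2--5 can occur (so that, for example, \emph{three cut-vertices with three bridges} or \emph{four or more cut-vertices} are impossible).
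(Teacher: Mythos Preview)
Your overall strategy --- fix a cut-vertex, use the rigidity of the degree sequence to constrain the pieces, and then concatenate Hamiltonian paths --- is close in spirit to what the paper does. The paper, however, works with the \emph{leaf blocks} of the block--cut tree rather than with the components of $G-v$; it first disposes of $\min(m,n)\le 2$ by hand (this is where Cases~4 and~5 come from, namely $P_5\in G_{2,3}$ and the unique graph in $G_{2,n}$), and then shows that for $m,n\ge 3$ each leaf block must contain vertices of both degrees with its unique cut-vertex having degree $n-1$. A short size count on two such leaf blocks then leaves only Cases~2 and~3. For traceability the paper does not describe the edge set of each block at all: it simply checks that each block satisfies Ore's condition, hence is Hamiltonian, and glues the two Hamiltonian paths at the cut-vertex or across the bridge.

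The concrete gap in your plan is the structural sublemma. Components of $G-v$ are \emph{not} always cliques or cliques with a matching removed, and the local degree argument you sketch (``a vertex of $C_i$ not adjacent to $v$ must be joined to every other vertex of $C_i$'') breaks as soon as $C_i$ contains a low-degree vertex while $|C_i|\ge n-1$. Take $m=3$, $n=5$: there is a graph $G\in G_{3,5}$ with a single cut-vertex $x$ of degree $4$ such that one block is $K_3$ and the other block $B$ has $6$ vertices. The component $C_2=B-x$ has five vertices --- one vertex $w$ of degree $2$ and four of degree $4$ --- and a quick count (using that $x$ has exactly $n-m=2$ neighbours in $C_2$, none of them $w$) forces
\[
G[C_2]\;=\;K_5\setminus\{wu_1,wu_2\},
\]
a clique minus a $2$-\emph{star}, not a matching; moreover the deleted edges run from $w$ (not adjacent to $x$) to $u_1,u_2$ (both adjacent to $x$), contradicting your ``between the vertices of $C_i$ not adjacent to $v$'' clause. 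In general, when one side is $K_m$ and the other component has $n$ vertices, that component is $K_n$ with an $(n-m)$-star deleted at its unique low-degree vertex. This is still Hamiltonian-connected enough for the traceability conclusion, but the sublemma as stated, and the endpoint-selection built on it, need to be redone. The paper avoids this bookkeeping entirely by never trying to pin down the missing edges --- it records only the \emph{sizes} of the two blocks and appeals to Ore.
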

\proof 

Note that there are no graphs in $G_{m,n}$ for $m=1$ or $n=1$, nor for $m=n=2$, since any graph $G$ that is degree equivalent to $K_m\cup K_n$ for those values of $m$ and $n$ is isomorphic to $K_m\cup K_n$. The only graph in $G_{2,3}$ and $G_{3,2}$ is $P_5$, whose cut-vertices and bridges are described by Case 5 of the proposition (and $P_5$ is  traceable). The only graph $G$ in $G_{2,n}$, $n\geq 4$ or in $G_{m,2}$, $m\geq 4$, is a graph consisting of a clique of size $n$ or $m$, respectively, with one edge deleted and two leaves attached to each of the endpoints of that edge. The cut-vertices and bridges of such a graph are described by Case 4 of the proposition, and such a graph is traceable. There are also clearly graphs $G\in G_{m,n}$ that satisfy Case~1 of the proposition (for example, the ones shown in Figure \ref{fig1}). 
Moreover, the only way for a graph to have bridges and no cut-vertices is if it has at least one $K_2$ component, which by Proposition \ref{prop00} is impossible for any graph in $G_{m,n}$.

We will now show that if $G=(V,E)$ is a graph in $G_{m,n}$ with $m\geq 3$ and $n\geq 3$ and if $G$ has at least one cut-vertex, then $G$ satisfies either Case 2~or Case~3 of the proposition, and is traceable. It is well-known (see, e.g., \cite{agnarsson}) that a separable graph has at least two blocks that each contain exactly one cut-vertex. Let $G_1=(V_1,E_1)$ and $G_2=(V_2,E_2)$ be two such blocks and let $x$ be the cut-vertex of $G_1$. 

Suppose without loss of generality that $n\geq m$. If $n=m$, then all vertices in $V_1$ have degree $n-1$, and so $|V_1|\geq n$, since some non-cut vertex of $G_1$ will have at least $n-1$ neighbors in $G_1$. For the same reason, $|V_2|\geq n$. 
Then, at least one of $|V_1|$ and $|V_2|$ must equal $n$, since otherwise there will be more than $m+n$ vertices in $G$. Suppose without loss of generality that $|V_1|=n$. Then, $G_1$ is a clique of size $n$, so the cut-vertex of $G_1$ must have degree greater than $n-1$, a contradiction. Thus, $n>m$.

Suppose all vertices in $V_1$ have the same degree. Without loss of generality, suppose the degree is $n-1$. Then, $|V_1| \geq n$. Since there are $n$ total vertices in $G$ with degree $n-1$, it follows that $|V_1|=n$. Thus, $G_1$ is a clique of size $n$, so the cut-vertex of $G_1$ must have degree greater than $n-1$, which contradicts the assumption that all vertices in $V_1$ have degree $n-1$. Thus, some vertices in $V_1$ have degree $m-1$ and some have degree $n-1$. $G_1$ must have at least $m$ vertices, since some non-cut vertex of $G_1$ will have at least $m-1$ neighbors in $G_1$. If $x$ has degree $m-1$, then since $|V_1|\geq m$, it follows by a similar argument as above that $G_1$ must be a clique $K_m$, in which case $x$ cannot be a cut-vertex. Thus, $x$ must have degree $n-1$. 

Suppose first that  only $x$ has degree $n-1$, and all other vertices in $G_1$ have degree $m-1$. Then, $|V_1| \geq m$ and similarly as above it follows that $G_1$ is a clique $K_m$. If $G_2$ is the only other block of $G$, then $G$ satisfies Case 2 of the proposition. If  $G_2$ is not the only other block of $G$, then since $G_1$ only has one vertex of degree $n-1$, $G_2$ must have at least one non-cut vertex of degree $n-1$ and therefore must have $n$ total vertices. Then, since all vertices of $G$ must either be in $G_1$ or $G_2$, the only other block of $G$ must be a bridge between $G_1$ and $G_2$. This satisfies Case 3 of the proposition.

Next, suppose that at least one non-cut vertex of $G_1$ has degree $n-1$. Then, $|V_1| \geq n$, and  $|V_1| \leq n+1$ (since if $|V_1| > n+1$ then $|V_2| < m$, and so the vertices in $V_2$ could not have degree at least $m-1$). If $|V_1|=n+1$, then $|V_2|=m$ with $V_1$ and $V_2$ sharing a vertex, and so $G_1$ and $G_2$ must be the only blocks of $G$. This satisfies Case 2 of the proposition. If $|V_1|=n$ and $|V_2|=m$, then  $G_1$ and $G_2$ are two blocks separated by a bridge, which satisfies Case 3 of the proposition. If $|V_1|=n$ and $|V_2|=m+1$, then $G_1$ and $G_2$ are  the only blocks of $G$, and $G$ satisfies Case 2 of the proposition.

Finally, for all types of separable graphs described above, the blocks $G_1$ and $G_2$ are Hamiltonian by Theorem~\ref{ore}. Thus, $G$ is traceable since a Hamiltonian path of $G_1$ ending at the cut vertex of $G_1$ can be combined with a Hamiltonian path of $G_2$ ending at the cut vertex of $G_2$, through the cut-vertex or bridge between the two blocks. 
\qed

\medskip
Proposition \ref{bridge_cut-vertex} shows that a graph in $G_{m,n}$ with a cut-vertex must be traceable (and cannot be Hamiltonian).
In $G_{m,n}$ there are also graphs without a cut-vertex that are traceable but not Hamiltonian, as well as graphs without a cut-vertex that are Hamiltonian.
Figure~\ref{fig44} shows two such graphs. In fact, we make the following conjecture about the traceability of graphs in $G_{m,n}$.

\begin{conjecture}
Every graph $G \in G_{m,n}$ is traceable.   
\label{conjecture}
\end{conjecture}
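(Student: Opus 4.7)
The plan is to split on whether $G$ has a cut-vertex. If $G$ has at least one cut-vertex, Proposition~\ref{bridge_cut-vertex} already delivers traceability, so I may assume $G$ is $2$-connected; without loss of generality, take $m\leq n$.

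For the regular case $m=n$, every vertex of $G$ has degree $n-1$, so $G$ is a $2$-connected $(n-1)$-regular graph on $2n$ vertices. Since $2n\leq 3(n-1)+1$ for every $n\geq 2$, Theorem~\ref{3r+1} gives that $G$ is Hamiltonian or is the Petersen graph. But the Petersen graph is $3$-regular on $10$ vertices, and membership in some $G_{n,n}$ would simultaneously force $n-1=3$ and $2n=10$, which is impossible. Hence $G$ is Hamiltonian, and therefore traceable.

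For $m<n$, I would aim to apply Theorem~\ref{rahman-Kaykobad} by verifying $\deg(u)+\deg(v)+d(u,v)>m+n$ for every non-adjacent pair $u,v$. Let $A$ and $B$ denote the vertex sets of degree $m-1$ and $n-1$, respectively. The easy subcase is $u,v\in B$: then $\deg(u)+\deg(v)+d(u,v)\geq 2n-2+2=2n>m+n$. The delicate subcases are a mixed pair $u\in A$, $v\in B$ at distance $2$ (where the inequality fails by exactly $1$) and, worst of all, a non-adjacent pair $u,v\in A$, where the Rahman--Kaykobad bound requires $d(u,v)>n-m+2$, which need not hold a priori.

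The main obstacle is therefore the subcase of two non-adjacent low-degree vertices, and I view it as the decisive step. To approach it, I would exploit the edge-count identities $m(m-1)=2e_A+e_{AB}$ and $n(n-1)=2e_B+e_{AB}$ together with the $2$-connectivity of $G$: a non-adjacent pair in $A$ forces edges to $B$, and the high degree of $B$-vertices should both shorten distances globally and impose structural rigidity that one hopes to turn into a contradiction with $d(u,v)\leq n-m+2$. A complementary line of attack is induction on a $2$-switch sequence connecting $G$ to a traceable reference graph in $G_{m,n}$, together with a case analysis of how a $2$-switch interacts with a Hamiltonian path; the difficulty there is that $2$-switches can destroy $2$-connectivity, so one has to interleave the inductive step with Proposition~\ref{bridge_cut-vertex}. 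If neither approach yields a uniform argument, I would try to construct a Hamiltonian path directly by interleaving vertices of $A$ and $B$ along an ordering obtained from a Hall-type matching between the two sides.
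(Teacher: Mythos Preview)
The statement you are attempting to prove is \emph{not} proved in the paper: it is stated there as an open conjecture, and the paper only assembles partial evidence for it (Propositions~\ref{bridge_cut-vertex}, \ref{prop_twicebridge}, \ref{lemma1}, and Theorem~\ref{bipartite}) before listing it as an open question in the conclusion. So there is no ``paper's own proof'' to compare against.

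Your proposal correctly reproduces the two pieces of evidence the paper already has---separable graphs via Proposition~\ref{bridge_cut-vertex} and the regular case $m=n$ via Theorem~\ref{3r+1}---but does not go beyond them. For the $2$-connected case with $m<n$ you yourself identify that Theorem~\ref{rahman-Kaykobad} fails by exactly~$1$ on a mixed pair at distance~$2$ and fails more seriously on a non-adjacent pair inside~$A$; what follows is a list of hoped-for attacks (edge-count identities, $2$-switch induction, Hall-type interleaving) rather than an argument. None of these sketches is carried to a point where one can check a concrete step, and in fact the $2$-switch induction idea has the difficulty you mention (loss of $2$-connectivity) together with a subtler one: even when $2$-connectivity is preserved, a single $2$-switch can destroy \emph{every} Hamiltonian path simultaneously, so an inductive step would need much more structure than ``a case analysis of how a $2$-switch interacts with a Hamiltonian path.'' In short, the decisive subcase---two non-adjacent vertices of degree $m-1$ in a $2$-connected graph with $m<n$---remains open in your write-up exactly as it does in the paper.
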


In the remainder of this section we provide various evidence for this conjecture. We begin with a more general construction of infinite families of 2-connected Hamiltonian graphs in $G_{m,n}$. A \emph{twin bridge} joining graphs $G_1$ and $G_2$ consists of two edges which connect two vertices of $G_1$ with two vertices of $G_2$.

\begin{figure}
\begin{center}
\includegraphics[scale=0.40]{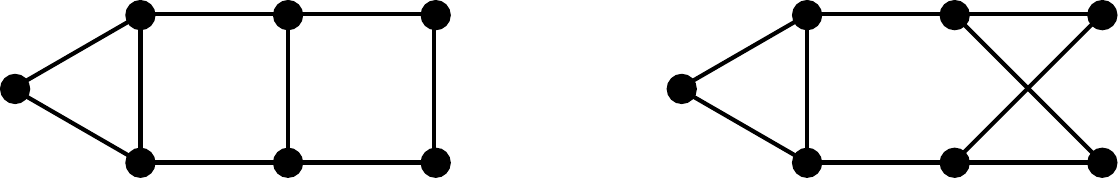}
\end{center}
\caption{Illustration to the proof of Proposition~\ref{prop_twicebridge} showing two 2-connected graphs from $G_{3,4}$. 
{\em Left:} The graph is Hamiltonian. {\em Right:} The graph is traceable but not Hamiltonian.}
\label{fig44}
\end{figure}


\begin{proposition}
Let $G$ be a graph in $G_{m,n}$, $n > m \geq 3$ which can be obtained by joining two 2-connected graphs by a twin bridge. Then $G$ is Hamiltonian except for the graph in Figure~\ref{fig44}, right, which is traceable. 
\label{prop_twicebridge}
\end{proposition}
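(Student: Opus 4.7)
The plan hinges on the observation that the only edges between $V(G_1)$ and $V(G_2)$ are the two twin-bridge edges $u_1v_1$ and $u_2v_2$. Since any Hamiltonian cycle of $G$ crosses between the two sides an even number of times, it must use both bridge edges; equivalently, $G$ is Hamiltonian if and only if $G_1$ admits a Hamiltonian $u_1$--$u_2$ path and $G_2$ admits a Hamiltonian $v_1$--$v_2$ path. The entire argument reduces to producing these two spanning paths, or, in the exception, to exhibiting a Hamiltonian path of $G$ without a Hamiltonian cycle.

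I would begin with a case analysis on the orders $a = |V(G_1)|$ and $b = |V(G_2)|$ (with $a+b = m+n$ and $a,b \geq 3$) and on the distribution of vertices of $G$-degree $m-1$ versus $n-1$ across the two sides. A non-twin-bridge vertex of $G$-degree $n-1$ has all its $n-1$ neighbours on its own side, forcing that side to have at least $n$ vertices; a twin-bridge endpoint of $G$-degree $n-1$ forces its side to have at least $n-1$ vertices. Combined with $a + b = m + n$, $n > m \geq 3$, and the 2-connectivity of each side, these constraints leave only a short list of feasible structural types for $(G_1, G_2)$, in which the $G_i$-degrees are determined from $G$-degrees by subtracting one unit at the twin-bridge endpoints.

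For each configuration on this list, I would either verify Ore's condition (Theorem~\ref{ore}) on the auxiliary graph $G_i + u_1u_2$ (respectively $G_i + v_1v_2$) and argue that a Hamiltonian cycle in the augmented graph can be chosen to use the added edge, or observe that $G_i$ takes a transparent form such as $K_a - u_1u_2$ for which the Hamiltonian path between the two designated endpoints is immediate. Stitching the two paths together through the twin bridge then produces a Hamiltonian cycle of $G$.

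The main obstacle, and the source of the single exception, is the smallest case $m = 3$. There, 2-connectivity forces the three-vertex side to be $K_3$; the degree count then forces $n = 4$ and the four-vertex side to be $C_4$. Up to isomorphism, only two configurations of $\{v_1, v_2\}$ inside $C_4$ are possible: adjacent (which still admits a Hamiltonian $v_1$--$v_2$ path, so $G$ is Hamiltonian), or antipodal (which admits no such path, since the two remaining vertices of $C_4$ are nonadjacent). In the antipodal case, $G$ is not Hamiltonian but is still traceable, since deleting either bridge edge leaves a Hamiltonian path of $G$; this is precisely the graph in Figure~\ref{fig44}, right. The proof concludes by verifying that no other configuration arising from the case analysis produces a non-Hamiltonian graph.
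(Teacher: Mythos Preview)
Your plan follows essentially the same architecture as the paper's proof: use the degree constraints to pin down the structure of the two sides, find Hamiltonian paths in each side between the twin-bridge endpoints, and splice them via the two bridge edges; then handle the small case $m=3$ by hand, where the $C_4$ side with antipodal attachment yields the unique exception. The paper carries out the structure step more explicitly than you do: it shows (by arguments parallel to those in Proposition~\ref{bridge_cut-vertex}) that $(G_1,G_2)$ must be either $(K_m-e,\,K_n-e')$ or, when $m=n-1$, $(K_m,\,K_n-e-e')$ with $e,e'$ disjoint. Because the pieces are complete graphs minus one or two edges, the required Hamiltonian $u_1$--$u_2$ and $v_1$--$v_2$ paths are written down by inspection rather than via any Ore-type machinery.

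One point to tighten in your outline: invoking Ore's theorem on the augmented graph $G_i+u_1u_2$ only yields \emph{some} Hamiltonian cycle, not one through the added edge, so it does not directly produce a Hamiltonian $u_1$--$u_2$ path in $G_i$. You would need either the stronger Hamiltonian-connectedness criterion (degree sum $\geq |V(G_i)|+1$ for nonadjacent pairs) or, as the paper does and as you also allow, the direct observation that the pieces are near-complete and the path is explicit. Once the two structural cases are on the table, the latter route is immediate, and your analysis of the $m=3$, $n=4$ exception matches the paper's.
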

\proof The proof is similar to the proof of Proposition~\ref{bridge_cut-vertex}, therefore some details are omitted. By arguments that parallel those used in the proof of Proposition~\ref{bridge_cut-vertex} it can be shown that $G$ must have one of the following two structures.

{\em Case 1:} \ $G$ can be obtained by removing an edge $e=uv$ from a clique $K_m$  and an edge $e'=u'v'$ from a clique $K_n$ and adding the edges $uu'$ and $vv'$. See Figure~\ref{mn}, left for an example. We will refer to $K_m-e$ as $G_1$ and $K_n-e'$ as $G_2$.

{\em Case 2:} $G$ can be obtained by removing two non-incident edges $e=uv$ and $e'=u'v'$ from a clique $K_n$ and joining two vertices among $\{u,v,u',v'\}$ with two vertices $a$ and $b$ of a clique $K_m$ where $m=n-1$ by two edges. See Figure~\ref{mn}, right for an example. We will refer to $K_m$ as $G_1$ and $K_n-e-e'$ as $G_2$.

We will first consider the situation when $m \geq 4$ (and hence $n \geq 5$). Then, in both cases, by Theorem~\ref{dirac} both $G_1$ and $G_2$ are Hamiltonian. If $G$ satisfies Case~1, then a Hamiltonian cycle of $G$ can be obtained by merging a Hamiltonian path of $G_1$ ending at $u$ and $v$ with a Hamiltonian path of $G_2$ ending at $u'$ and $v'$ using the edges $uu'$ and $vv'$. 
If $G$ satisfies Case 2, let $x_1,\ldots,x_k$ be the vertices of $G_2$ different from $u$, $v$, $u'$, and $v'$. Note that since $n\geq 5$, $k$ is at least 1. If the two vertices among $\{u,v,u',v'\}$ which are connected to $a$ and $b$ have an edge between them, suppose without loss of generality that they are $u$ and $u'$. Then, a Hamiltonian cycle of $G$ can be obtained by merging a Hamiltonian path of $G_1$ ending at $a$ and $b$ with the Hamiltonian path $u, v', x_1,\ldots,x_k, v, u'$ of $G_2$ using the two edges between $G_1$ and $G_2$. 
If the two vertices among $\{u,v,u',v'\}$ which are connected to $a$ and $b$ do not have an edge between them, suppose without loss of generality that they are $u$ and $v$. Then, a Hamiltonian cycle of $G$ can be obtained by merging a Hamiltonian path of $G_1$ ending at $a$ and $b$ with the Hamiltonian path $u, u', x_1,\ldots,x_k, v', v$ of $G_2$ using the two edges between $G_1$ and $G_2$. 

Finally, consider the case when $m=3$. Then the structure of $G$ must satisfy Case 2, because in Case 1, $G_1$ would not be 2-connected. Then, since $m=n-1$, $G$ is a graph in $G_{3,4}$. There are only two non-isomorphic graphs in $G_{3,4}$ that satisfy Case 2, and they are depicted in Figure~\ref{fig44}. One of them is Hamiltomian; the other one is traceable and not Hamiltonian, and it is the unique graph with this property.      
\qed

\begin{figure}
\begin{center}
\includegraphics[scale=0.48]{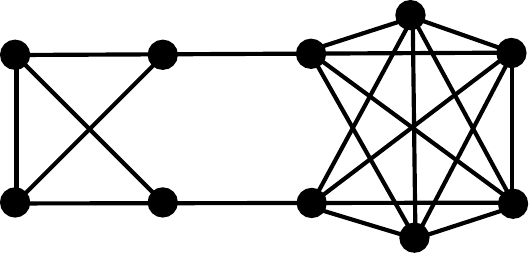}\qquad \qquad \qquad
\includegraphics[scale=0.48]{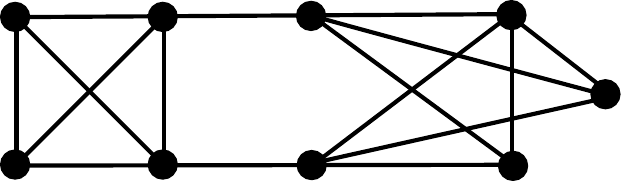}
\end{center}
\caption{Illustration to the proof of Proposition~\ref{prop_twicebridge}: Case 1 ({\em Left}) and Case 2 ({\em Right}).}
\label{mn}
\end{figure}


\medskip
For the special case of Conjecture \ref{conjecture} where $m=n$ we have the following characterization. 
\begin{proposition}
Any graph $G \in G_{n,n}$ is 2-connected and Hamiltonian.
\label{lemma1}
\end{proposition}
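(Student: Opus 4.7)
The plan is to decouple the statement into two subclaims: first, that $G$ is $2$-connected, and second, that $G$ is Hamiltonian. Once $2$-connectivity is in hand, Hamiltonicity will follow from the Holton--Sheehan theorem (Theorem~\ref{3r+1}), since $G$ is $(n-1)$-regular on $2n$ vertices and $2n \leq 3(n-1)+1$ whenever $n \geq 2$. Note that Dirac and Ore (Theorems~\ref{dirac},~\ref{ore}) just barely fail here: a vertex has degree $n-1$ instead of the needed $n$, and a non-adjacent pair has degree sum $2n-2$ instead of $2n$. This is why Holton--Sheehan is the natural tool to close the gap.

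For $2$-connectivity I would argue by contradiction. By Proposition~\ref{prop00}, $G$ is connected, so it suffices to rule out a cut-vertex. Suppose $x$ is a cut-vertex and let $C$ be any component of $G-x$. For every $v \in C$, all $n-1$ neighbors of $v$ lie in $C \cup \{x\}$, which forces $|C| \geq n-1$. Since $|V(G-x)| = 2n-1$, there must be exactly two components, of sizes $n-1$ and $n$. Let $C_1$ be the smaller one. Every vertex of $C_1 \cup \{x\}$ (a set of size $n$) has degree $n-1$ in $G$ with all neighbors inside this set, so $G[C_1 \cup \{x\}] = K_n$. In particular, $x$ has $n-1$ neighbors in $C_1$. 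But since $x$ is a cut-vertex of the connected graph $G$, it must also have at least one neighbor in the other component, giving $\deg(x) \geq n$, contradicting $\deg(x) = n-1$.

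To conclude, I apply Theorem~\ref{3r+1}: a $2$-connected $(n-1)$-regular graph on at most $3(n-1)+1$ vertices is Hamiltonian or is the Petersen graph. The Petersen graph is $3$-regular on $10$ vertices; membership in $G_{n,n}$ would require simultaneously $n-1 = 3$ and $2n = 10$, i.e., $n=4$ and $n=5$, which is impossible. Thus $G$ is Hamiltonian. (The small cases $n \leq 2$ are vacuous since $G_{n,n}$ is empty, and $n=3$ reduces to $C_6$, which is trivially covered.)

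The main obstacle I anticipate is the $2$-connectivity step, because the degree threshold is extremely tight: the counting argument works only because the bound $|C| \geq n-1$ combined with the total count $2n-1$ pins the component sizes exactly, which in turn forces the hidden clique structure on $C_1 \cup \{x\}$. After that, the Hamiltonicity step is essentially a ready-made invocation of Holton--Sheehan, with only the Petersen exception to dispose of.
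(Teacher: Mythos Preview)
Your proof is correct and follows essentially the same approach as the paper: the paper establishes $2$-connectivity by citing the $m=n$ case worked out inside the proof of Proposition~\ref{bridge_cut-vertex} (an end-block argument that, just like your component argument, forces a hidden $K_n$ and then a degree contradiction at the cut-vertex), and then applies Theorem~\ref{3r+1} with the same Petersen-graph disposal. The only cosmetic difference is that you give the $2$-connectivity argument self-contained via components of $G-x$, whereas the paper packages it in terms of end-blocks and refers back to the earlier proposition.
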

\proof In the proof of Proposition~\ref{bridge_cut-vertex}, it was shown that when $m,n\geq 3$ and $n=m$, 
a graph in $G_{m,n}$ cannot have a cut-vertex. It was also shown that $G_{1,1}$ and $G_{2,2}$ are empty. Thus, any graph $G\in G_{n,n}$ is 2-connected. Next, note that the Petersen graph is not in $G_{n,n}$ for any $n$, since if it was, it would have to belong to $G_{5,5}$ as it has 10 vertices, but the Petersen graph is 3-regular while the graphs in $G_{5,5}$ are 4-regular. 
Thus, since a graph $G\in G_{n,n}$ is a 2-connected $(n-1)$-regular graph on $2n$ vertices different from the Petersen graph, and $2n \leq 3(n-1)+1=3n-2$ for any $n \geq 2$, by Theorem~\ref{3r+1} it follows that $G$ is Hamiltonian.
\qed
\medskip
We now explore the connectivity and Hamiltonicity of graphs in $\overline G_{m,n}$. We first show that all graphs in $\overline G_{m,n}$ with $m\neq n$ are 2-connected. We then show that when $m=n$, the graphs in $\overline G_{m,n}$ are not only 2-connected but also pancyclic. 
\begin{proposition}
Any graph $G \in \overline G_{m,n}$ with $m\neq n$ is 2-connected.
\label{complement Gmn 2-connected}
\end{proposition}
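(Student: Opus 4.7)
\emph{Plan.} I would argue by contradiction via vertex-counting, first showing connectedness and then the absence of a cut-vertex. Without loss of generality assume $m<n$ (since $m\neq n$), and note that $m\geq 2$: the degree sequence $\{1^n,n^1\}$ is uniquely realized by $K_{1,n}$, so $\overline{G}_{1,n}$ is empty. Call the $m$ vertices of degree $n$ in $G$ \emph{big} and the $n$ vertices of degree $m$ \emph{small}. The key fact I will reuse is that a vertex of degree $d$ whose neighbors all lie in some set $S$ forces $|S|\geq d$.

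Suppose first that $G$ has components $C_1,\ldots,C_k$ with $k\geq 2$. For $u\in V(C_i)$, all neighbors of $u$ lie in $V(C_i)\setminus\{u\}$, so $|V(C_i)|\geq m+1$; if additionally $C_i$ contains a big vertex, $|V(C_i)|\geq n+1$. Since $\sum_i |V(C_i)|=m+n$, a component with a big vertex leaves at most $m-1<m+1$ vertices for the remaining components, which is impossible. Hence no component contains a big vertex, contradicting the existence of $m\geq 2$ big vertices. So $G$ is connected. Next, suppose $v$ is a cut-vertex and let $C_1,\ldots,C_k$ ($k\geq 2$) be the components of $G-v$. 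Now $u\in V(C_i)$ may use $v$ as a neighbor, so the bounds relax to $|V(C_i)|\geq m$ in general and $|V(C_i)|\geq n$ whenever $C_i$ contains a big vertex. Since $\sum_i |V(C_i)|=m+n-1$, the analogous counting forces no $C_i$ to contain a big vertex; but $G-v$ still contains at least $m-1\geq 1$ big vertices (all $m$ of them if $v$ is small, $m-1$ if $v$ is big), a contradiction.

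The main obstacle, I expect, is bookkeeping rather than any deep idea: one must carefully track how the per-component lower bound weakens from $m+1$ to $m$ once the cut-vertex is adjoined, and notice that the tightness of $m-1\geq 1$ in the cut-vertex case is precisely what forces the exclusion of $m=1$ at the outset. Verifying the argument at the smallest meaningful instance (e.g.\ $m=2,\,n=3$) should confirm that the bounds are sharp and that the proof handles boundary cases correctly.
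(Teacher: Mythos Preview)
Your proof is correct and takes a cleaner route than the paper for the $2$-connectedness step. Both arguments begin by ruling out $m=1$, assume $m<n$, and prove connectedness by the same kind of component-size count. For the absence of a cut-vertex, however, the paper works with the \emph{block} decomposition: it picks two end-blocks $G_1,G_2$ (each containing exactly one cut-vertex) and runs a case analysis on which degrees appear among their non-cut vertices; this leaves a residual $m=2$ subcase that the paper must handle separately by classifying all graphs with exactly two degree-$n$ cut-vertices and all remaining vertices of degree $2$. You instead look directly at the components $C_1,\ldots,C_k$ of $G-v$ and observe that $|V(C_i)|\geq m$ always, and $|V(C_i)|\geq n$ whenever $C_i$ contains a big vertex; since $\sum_i|V(C_i)|=m+n-1$ and $k\geq 2$, no $C_i$ can contain a big vertex, yet at least $m-1\geq 1$ big vertices survive in $G-v$. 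This sidesteps the block machinery entirely and needs no separate boundary analysis; the paper's approach, by contrast, incidentally extracts structural information about the blocks but pays for it with a longer case split.
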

\proof
Suppose without loss of generality that $m<n$. Note that there are no graphs in $\overline{G}_{m,n}$ for $m=1$ or $n=1$, nor for $m=n=2$, since any graph $G$ that is degree-equivalent to $K_m\cup K_n$ for those values of $m$ and $n$ is isomorphic to $K_m\cup K_n$, and therefore $G_{m,n}$ and $\overline{G}_{m,n}$ are empty for those values of $m$ and $n$.

If $G \in \overline G_{m,n}$, then $G$ has $m$ vertices of degree $n$ and $n$ vertices of degree $m$. We will first show that $G$ is connected. Suppose for contradiction that this is not the case, and let $G_1$ be an arbitrary component of $G$. If $G_1$ has only vertices of degree $n$, then $G_1$ must have at least $n+1>m$ vertices, which contradicts the fact that there are $m$ vertices of degree $n$. 
If $G_1$ has only vertices of degree $m$, then $G_1$ has at least $m+1$ vertices. Then the other components of $G$ would collectively have fewer than $n$ vertices, so those vertices could not have degree $n$. Thus, $G$ could not have any degree $n$ vertices, a contradiction. 
If $G_1$ has vertices of degree $m$ and $n$, then $G_1$ must have at least 
$n+1$ vertices. Then the other components of $G$ would collectively  have fewer than $m$ vertices, so they could not have degree $m$ nor degree $n$, a contradiction.

We will now show that $G$ is 2-connected. Suppose for contradiction that this is not the case, and let $G_1$ and $G_2$ be two blocks of $G$, each of which contains exactly one cut-vertex. If all vertices of $G_1$ (possibly except the cut vertex) have  degree $n$, then $G_1$ must have at least $n+1$ vertices; since $G$ has a total of $m<n$ vertices of degree $n$, this is a contradiction. If all vertices of $G_1$ (possibly except the cut vertex) have degree $m$, then $G_1$ must have at least $m+1$ vertices. Then the other blocks of $G$ would collectively have at most $n$ vertices, so they could not have any vertices of degree $n$. Then $G$ would not have any vertices of degree $n$ (possibly except the cut vertex of $G_1$), a contradiction. 
The above can be repeated identically with $G_2$ in place of $G_1$. Thus, it follows that both $G_1$ and $G_2$ must have both degree $m$ and degree $n$ vertices. 

Suppose at least one of $G_1$ and $G_2$, say $G_1$, has a vertex of degree $n$ which is not a cut-vertex. Then $G_1$ has at least $n+1$ vertices and $G_2$ has at most $m$ vertices. 
Then the non-cut vertices of $G_2$ must have degree less than $m$, which is a contradiction. Thus, it follows that the only vertices of degree $n$ of $G_1$ and $G_2$ are their cut-vertices. 
Then each of $G_1$ and $G_2$ as an induced subgraph must have at least $m+1$ vertices. We will consider two possibilities.


{\em Case 1:} $G_1$ and $G_2$ share a cut vertex $x$; there may or may not be other blocks which also share that cut vertex. Then, the graph obtained  by removing $G_1$ and $G_2$  from $G$
would have at most $m+n - (m+1) - m = n-m-1 < n$ vertices. Thus, $G$ cannot have vertices of degree $n$, possibly except the vertex $x$. This contradicts the assumption that there are $m\geq 2$ vertices of degree $n$. 

\begin{figure}[ht]
    \centering
    \includegraphics[scale=.45]{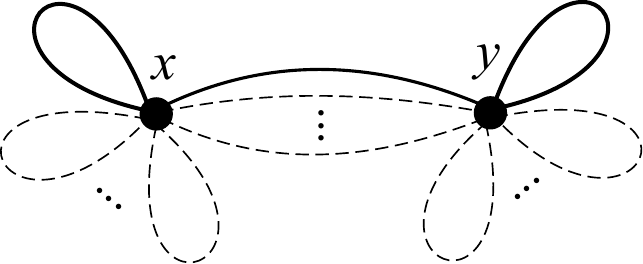}
    \caption{Structure of graphs with two cut vertices of degree $n>2$ and all other vertices having degree 2. Curves between $x$ and $y$ represent paths of length at least 1, ovals attached to $x$ and $y$ represent cycle blocks of size at least 3. Dashes mean the structure is possibly nonexistent.}
    \label{fig_m2}
\end{figure}

{\em Case 2:} $G_1$ and $G_2$ do not share a cut vertex; then both $G_1$ and $G_2$ share their respective cut vertex, say $x$ and $y$, with at least one other block. Then, the graph obtained  by removing $G_1$ and $G_2$  from $G$
would have at most $m+n - (m+1) - (m+1) = n-m-2 < n$ vertices. Thus, $G$ cannot have vertices of degree $n$, possibly except the vertices $x$ and $y$. If $m>2$, this contradicts the assumption that there are $m$ vertices of degree $n$. Thus, suppose $m=2$. Then $x$ and $y$ have degree $n$ and there are $n$ other vertices with degree 2. It follows that $G$ is a graph consisting of the two vertices $x$ and $y$, one or more paths of length at least 1 between $x$ and $y$, and one or more cycle blocks attached to each of $x$ and $y$; see Figure \ref{fig_m2} for an illustration. It is easy to see that for a graph with this structure, the number of vertices of degree 2 exceeds the degree of $x$ and $y$ by at least 1. Thus, there are more than $n$ vertices of degree 2, which is a contradiction.
\qed
\medskip

\begin{proposition}
Every graph $G \in \overline G_{n,n}$ is pancyclic.  
\label{complement Gnn pancyclic}
\end{proposition}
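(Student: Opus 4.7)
The plan is to derive pancyclicity from Bondy's theorem (Theorem~\ref{bondy}) after noting the key numerical coincidence that $n$-regularity on $2n$ vertices sits exactly at the edge threshold.

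First, I would translate the hypothesis into parameters of $G$ itself. Since $G \in \overline{G}_{n,n}$, its complement lies in $G_{n,n}$ and is therefore $(n-1)$-regular on $2n$ vertices; taking complements, $G$ is $n$-regular on $2n$ vertices, so $|E(G)| = \tfrac{1}{2}\cdot 2n\cdot n = n^2$. This is precisely $(2n)^2/4$, the threshold appearing in Bondy's theorem.

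Next, I would establish Hamiltonicity as a prerequisite for applying Theorem~\ref{bondy}. This is immediate from Dirac's theorem (Theorem~\ref{dirac}): every vertex of $G$ has degree $n = (2n)/2$, so $G$ is Hamiltonian. (Alternatively one could invoke Proposition~\ref{complement Gnn pancyclic}'s regular analogue by observing that the complement is $2$-connected, but Dirac's bound suffices.)

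Now Theorem~\ref{bondy} applied to $G$ (with $2n$ vertices and $n^2$ edges) gives that $G$ is either pancyclic or isomorphic to $K_{n,n}$. The final step is to rule out the exceptional case: the complement of $K_{n,n}$ is $K_n \cup K_n$, and by the very definition of $G_{n,n}$ we have $K_n \cup K_n \notin G_{n,n}$, hence $K_{n,n} \notin \overline{G}_{n,n}$. Therefore $G \neq K_{n,n}$ and $G$ must be pancyclic.

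The main obstacle, if any, is purely bookkeeping: confirming that the edge count in $G$ exactly meets Bondy's threshold and verifying that the excluded graph $K_{n,n}$ is actually forbidden by the (non-tautological) exclusion built into the definition of $G_{m,n}$. No case analysis like that of Proposition~\ref{bridge_cut-vertex} or Proposition~\ref{complement Gmn 2-connected} seems necessary here, since the $m=n$ symmetry makes the degree structure rigid enough to trigger the classical theorems directly.
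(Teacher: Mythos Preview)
Your argument is correct and follows essentially the same route as the paper: verify $n$-regularity on $2n$ vertices, apply Dirac to get Hamiltonicity, note $|E|=n^2=(2n)^2/4$, and invoke Bondy's theorem while ruling out $K_{n,n}$ via the exclusion $K_n\cup K_n\notin G_{n,n}$. (Minor quibble: your parenthetical alternative self-references Proposition~\ref{complement Gnn pancyclic}, which is circular; drop it.)
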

\proof
A graph $G=(V,E)\in G_{n,n}$ has $2n$ vertices, all of degree $n$. Thus, $G$ satisfies the conditions of Dirac's and Ore's theorems (Theorem~\ref{dirac} and Theorem~\ref{ore}) and hence is Hamiltonian. Moreover,
$|E|=\frac{1}{2}\sum_{v\in V}deg(v)=\frac{1}{2}(2n)n=n^2=\frac{|V|^2}{4}$. Since graphs in $\overline G_{n,n}$ are complements of graphs in $G_{n,n}$, and graphs in $G_{n,n}$ are different from a disjoint union $K_n \cup K_n$, it follows that $G\neq K_{n,n}$. Then, Bondy's theorem (Theorem~\ref{bondy}) implies that $G$ is pancyclic. 
\qed


\medskip
Finally, we consider graphs in $\overline G_{m,n}$ for the special case when $m=n-1$ and show that such graphs are traceable.

\begin{proposition}
Every graph $G \in \overline G_{n,n-1}$ is traceable.  
\label{complement Gnn-1}
\end{proposition}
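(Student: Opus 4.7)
The plan is to apply the Rahman--Kaykobad theorem (Theorem~\ref{rahman-Kaykobad}) directly. First I would record the parameters of $G$: as a member of $\overline{G}_{n,n-1}$, the graph $G$ has $n$ vertices of degree $n-1$ and $n-1$ vertices of degree $n$, for a total of $N := 2n-1$ vertices. (As noted at the beginning of the proof of Proposition~\ref{complement Gmn 2-connected}, the class $\overline{G}_{n,n-1}$ is empty unless $n\geq 3$, which is the only case of interest.)

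Next I would ensure that graph distances are finite by invoking Proposition~\ref{complement Gmn 2-connected}: since $n-1\neq n$, that proposition already gives that $G$ is 2-connected, hence connected, so $d(u,v)$ is well-defined for every pair of vertices. To verify the Rahman--Kaykobad hypothesis, take any two non-adjacent vertices $u,v$. We have $d(u,v)\geq 2$, and the minimum value of $deg(u)+deg(v)$ over non-adjacent pairs is $2(n-1)=2n-2$, attained when both $u$ and $v$ have the smaller degree. Consequently
\[
deg(u)+deg(v)+d(u,v)\;\geq\;(2n-2)+2\;=\;2n\;>\;2n-1\;=\;N,
\]
and Theorem~\ref{rahman-Kaykobad} supplies a Hamiltonian path of $G$.

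There is essentially no serious obstacle: the inequality is satisfied with exactly one unit to spare, so the whole argument hinges on the prior Proposition~\ref{complement Gmn 2-connected} to exclude disconnectedness (which alone would already kill the distance bound). An equivalent alternative would be to apply the classical Ore-type condition for traceability, since $deg(u)+deg(v)\geq N-1$ for all non-adjacent pairs; but Rahman--Kaykobad is already part of the paper's cited toolkit and needs no additional reference.
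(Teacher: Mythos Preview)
Your proof is correct and follows essentially the same route as the paper: both record the degree sequence of $G\in\overline{G}_{n,n-1}$, observe that any non-adjacent pair satisfies $\deg(u)+\deg(v)+d(u,v)\geq (n-1)+(n-1)+2=2n>2n-1$, and invoke Theorem~\ref{rahman-Kaykobad}. Your explicit appeal to Proposition~\ref{complement Gmn 2-connected} to guarantee finiteness of $d(u,v)$ is a small extra bit of care that the paper's version leaves implicit.
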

\proof
A graph $G\in \overline G_{n,n-1}$ has $2n-1$ vertices,  $n$ of which have degree $n-1$ and $n-1$ of which have degree $n$. Then, for any two non-adjacent vertices 
$u$ and $v$, $deg(u)+deg(v)+d(u,v) \geq (n-1)+(n-1)+2 = 2n$, and $2n$ is greater than the number of vertices of $G$. Thus, by Theorem~\ref{rahman-Kaykobad}, $G$ is traceable.  
\qed


\subsection{Bipartiteness}

In this section we characterize the graphs from $G_{m,n}$ and $\overline G_{m,n}$ that are bipartite. We also show that bipartiteness of a graph in $G_{m,n}$ implies traceability, which provides more evidence for Conjecture \ref{conjecture}.

\begin{theorem}
If a graph $G\in G_{m,n}$ is  bipartite with parts $V_1$ and $V_2$, $|V_1|\leq |V_2|$,  
then one of the following conditions holds:
\begin{enumerate}
\item[\text{1)}] $|V_1|=|V_2|$ and $n=m+2$ and $m,n$ are even
\item[\text{2)}] $|V_1|=m$ and $|V_2|=n$ and $n=m+1$
\item[\text{3)}] $|V_1|=|V_2|$ and $m=n$.
\end{enumerate}
Moreover, in the first case $G$ is Hamiltonian, in the second case $G$ is traceable, and in the third case $G$ is Hamiltonian.   
\label{bipartite}
\end{theorem}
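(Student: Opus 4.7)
The plan is to classify $G$ according to where the $n$ vertices of degree $n-1$ and the $m$ vertices of degree $m-1$ lie. Assume without loss of generality that $m\le n$, and let $\alpha,\beta$ (respectively $\gamma,\delta$) denote the numbers of degree-$(n-1)$ and degree-$(m-1)$ vertices in $V_1$ (respectively $V_2$); then $\alpha+\gamma=n$, $\beta+\delta=m$, $|V_1|=\alpha+\beta$, and $|V_2|=\gamma+\delta$. The main structural constraint I would exploit is that a vertex of $V_i$ has degree at most $|V_{3-i}|$, so a degree-$(n-1)$ vertex in $V_i$ forces $|V_{3-i}|\ge n-1$.

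When $m=n$, Proposition~\ref{lemma1} already says $G$ is Hamiltonian; a Hamiltonian bipartite graph has balanced parts, so $|V_1|=|V_2|=n$, which is Case~3. When $m<n$, I would first rule out the two extreme placements of the degree-$(n-1)$ vertices. The case $\gamma=0$ (all such vertices in $V_1$) gives $|V_1|\ge n>m\ge|V_2|$, contradicting $|V_1|\le|V_2|$. The case $\alpha=0$ forces $V_1$ to consist only of degree-$(m-1)$ vertices and $V_2$ only of degree-$(n-1)$ vertices with $|V_1|=m$ and $|V_2|=n=m+1$; then every vertex of $V_2$ is adjacent to all of $V_1$, so every vertex of $V_1$ has degree $|V_2|=m+1\neq m-1$, a contradiction. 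Hence $\alpha,\gamma>0$, giving $|V_1|,|V_2|\ge n-1$; together with $|V_1|+|V_2|=m+n$ this yields $n\le m+2$, so $n\in\{m+1,m+2\}$. The inequality $|V_1|\le|V_2|$ then pins down $|V_1|=m$, $|V_2|=m+1$ when $n=m+1$ (Case~2) and $|V_1|=|V_2|=m+1$ when $n=m+2$ (Case~1). For Case~1, equating the sum of degrees in $V_1$ to $|E|=m^2+m+1$ gives $\alpha=(m+2)/2$, which forces $m$, and hence $n$, to be even.

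For the ``moreover'' claims, Case~3 is Hamiltonian by Proposition~\ref{lemma1}. For Case~1, both parts have $m+1$ vertices and the smallest degree is $m-1$; since every $k$ with $1<k<(m+1)/2$ satisfies $k\le m-1$ (vacuously when $m=2$), no vertex has degree less than $k$, so Theorem~\ref{MoonMoser} applies and $G$ is Hamiltonian. Case~2 requires a more explicit construction. The bookkeeping pins $G$ down uniquely: all $m$ vertices of $V_1$ have degree $m$, there is a unique vertex $z\in V_2$ of degree $m$ which is adjacent to all of $V_1$, and the non-edges between $V_1$ and $V_2\setminus\{z\}$ form a perfect matching. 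Hence $G-z$ is isomorphic to $K_{m,m}$ minus a perfect matching. For $m\ge 3$, Theorem~\ref{MoonMoser} applied to $G-z$ yields a Hamiltonian cycle; removing one of its edges at a chosen vertex $u\in V_1$ gives a Hamiltonian path of $G-z$ starting at $u$, and prepending the edge $zu$ produces a Hamiltonian path of $G$. For $m=2$ the graph $G$ is isomorphic to $P_5$ and is trivially traceable.

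The main obstacle I expect is Case~2, because the off-the-shelf sufficient conditions just fail. For the non-adjacent pair consisting of $z$ and a degree-$(m-1)$ vertex $v\in V_2$ one computes $\deg(z)+\deg(v)+d(z,v)=m+(m-1)+2=2m+1$, which equals $|V(G)|$ rather than exceeding it, so Theorem~\ref{rahman-Kaykobad} just misses; Ore's theorem fails for the analogous reason. This is why the argument in Case~2 must lean on the rigid structural description of $G$ and on splicing the apex $z$ onto a Hamiltonian cycle of $K_{m,m}$ minus a perfect matching.
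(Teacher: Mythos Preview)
Your proof is correct and follows essentially the same strategy as the paper. Both arguments hinge on the constraint that a degree-$(n-1)$ vertex in one part forces the other part to have at least $n-1$ vertices, use edge double-counting to pin down the distribution of degrees across the parts, invoke Proposition~\ref{lemma1} for Case~3, apply Theorem~\ref{MoonMoser} to the balanced bipartite graph in Case~1, and in Case~2 delete the unique degree-$m$ vertex of $V_2$ to obtain a balanced $(m-1)$-regular bipartite graph to which Theorem~\ref{MoonMoser} again applies. The only real difference is organizational: the paper first splits on whether $|V_1|=|V_2|$, whereas you first rule out $\alpha=0$ and $\gamma=0$ and then read off $n\in\{m+1,m+2\}$ directly from $|V_1|,|V_2|\ge n-1$; your route is arguably a bit more streamlined, and your explicit identification of $G-z$ as $K_{m,m}$ minus a perfect matching, together with the remark on why Theorems~\ref{ore} and~\ref{rahman-Kaykobad} just miss in Case~2, adds useful context absent from the paper.
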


\begin{proof}
Let $G=(V_1,V_2,E)\in G_{m,n}$ be a bipartite graph. If $n=m$, then $G$ is regular and must therefore be balanced. Thus, in this case, Condition 3) is satisfied. Moreover, by 
Proposition \ref{lemma1}, $G$ is Hamiltonian. 
If $m<3$ or $n<3$, then $G$ is $P_5$, in which case Condition 2) is satisfied. Note that any other graph in $G_{m,n}$ with $m<3$ or $n<3$ is a graph consisting of a clique of size at least 4 with one edge deleted and two leaves attached to each of the endpoints of that edge, in which case $G$ is not bipartite. Thus, suppose hereafter that $n>m\geq 3$. 

Suppose $|V_1|=|V_2|$. Then the number of degree $m-1$ vertices in both $V_1$ and $V_2$ must be $\frac{m}{2}$, and the number of degree $n-1$ vertices in both $V_1$ and $V_2$ must be $\frac{n}{2}$, since otherwise the part with the larger number of degree $n-1$ vertices will have a higher total degree than the other part. Then, $m$ and $n$ must both be even. Moreover, since both parts contain a degree $n-1$ vertex, and both parts contain $\frac{m}{2}+\frac{n}{2}$ vertices, it follows that $\frac{m}{2}+\frac{n}{2}\geq n-1$. Thus, $m\geq n-2$, but also by assumption $m<n$, so $n-1\geq m\geq n-2$. Finally, $m$ cannot equal $n-1$, since both $m$ and $n$ are even, so $m=n-2$. Thus, in this case Condition 1) is satisfied.

Next, let $G=(V_1,V_2,E) \in G_{m,n}$, $n>m\geq 3$, and 
suppose $|V_1|\neq |V_2|$. 
Note that both parts of $G$ must contain some vertices of degree $m-1$ and some vertices of degree $n-1$, since otherwise the part with all the degree $n-1$ vertices will have a higher total degree than the other part. Thus, $|V_2|>|V_1|\geq n-1> m-1$, so $|V_1|\geq m$ and $|V_2|\geq m+1$.
Let $k=n-m$. Since $k$ is a nonnegative integer, $k=\lceil k/2\rceil +\lfloor k/2\rfloor$. Thus, $|V_1|+|V_2|=m+n=2m+k=2m+\lceil k/2\rceil +\lfloor k/2\rfloor$. 
Note that $|V_1|\leq m+\lfloor k/2\rfloor$, since otherwise $|V_2|$ would not be bigger than $|V_1|$.
Thus, we can let $|V_1|=m+\lfloor k/2 \rfloor - r$ for some $r\geq 0$, which means  $|V_2|=m+\lceil k/2 \rceil + r$.
Since $V_2$ contains vertices of degree $n-1$, it follows that $|V_1|=m+\lfloor k/2 \rfloor - r \geq
n-1=m+k-1$. Thus, 
$\lfloor k/2 \rfloor - r \geq
k-1$, which implies 
$k=2, r=0$ or $k=1, r=0$.
In the former case  $|V_1|=m+1=|V_2|$, which contradicts the assumption that $|V_1|\neq |V_2|$. 
In the latter case, $|V_1|=m$ and $|V_2|=m+1$, and since $k=1$, $n=m+1$. Thus, in this case Condition 2) is satisfied.

Now suppose that $G=(V_1,V_2,E)\in G_{m,n}$, $n>m\geq 3$, is a bipartite graph with $|V_1|=|V_2|$ and $n=m+2$. We will show that $G$ is Hamiltonian.
Since $n=m+2$, it follows that \[n-1 \geq \frac{n-1}{2}=
\frac{n+n-2}{4}=
\frac{n+m}{4}=\frac{|V_1|+|V_2|}{4}=\frac{2|V_1|}{4}=\frac{|V_1|}{2}.\]
Moreover, since 
$m\geq 3$, it follows that 
\[m-1\geq\frac{m+1}{2}=\frac{m+m+2}{4}=\frac{m+n}{4}=
\frac{|V_1|+|V_2|}{4}=\frac{2|V_1|}{4}=\frac{|V_1|}{2}.\]
Since the only degrees of vertices in $G$ are $n-1$ and $m-1$, it follows that there are no vertices in $G$ of degree less than $\frac{|V_1|}{2}$. Thus, for every $k$ where $1< k< \frac{|V_1|}{2}$, the number of vertices $v \in V_1$ with $deg(v) < k$ is 0 and therefore is less than $k$. The same holds if $V_1$ is replaced by $V_2$. Thus, by Theorem~\ref{MoonMoser}, $G$ is Hamiltonian.

Finally, suppose that $G=(V_1,V_2,E)\in G_{m,n}$, $n>m\geq 3$, is a bipartite graph with $|V_1|=m$ and $|V_2|=n$ and $n=m+1$. We will show that $G$ is traceable. Let $V_2$ have $a$ vertices of degree $m$ and $b$ vertices of degree $m-1$, and let $V_1$ have $c$ vertices of degree $m$ and $d$ vertices of degree $m-1$. Since $G$ is bipartite, we have 
\begin{equation}
am+b(m-1)=cm+d(m-1).
\label{eq1}
\end{equation} 
Since $|V_1|=m$ and $|V_2|=n=m+1$, we also have $a=m+1-b$ and $c=m-d$. Substituting $a$ and $c$ in (\ref{eq1}) we obtain $b=m+d$. 
On the other hand, the number of degree $m-1$ vertices in $G$ is $b+d=m$. Thus, it follows that $d=0$, and hence $b=m$, $a=1$, and $c=m$.
This means all $m$ vertices in $V_1$ have degree $m$, while $V_2$ has $m$ vertices of degree $m-1$ and one vertex of degree $m$. 
Let $v$ be the vertex of degree $m$ in $V_2$, and let 
$G'=G-v$. $G'$ is a balanced $(m-1)$-regular bipartite graph.
Since $m\geq 3$, there are no vertices in either part of $G'$ with degree less than or equal to $k$ for $1< k< \frac{m}{2}$, so by Theorem~\ref{MoonMoser}, $G'$ is Hamiltonian. Then $G$ is traceable.  
\qed
\end{proof}

\begin{proposition}
Let $G\in \overline G_{m,n}$. Then, $G$ is not bipartite.
\label{prop_bipart}
\end{proposition}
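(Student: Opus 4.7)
The plan is to argue by contradiction. Suppose $G\in\overline{G}_{m,n}$ is bipartite with parts $V_1,V_2$. Since $V_1$ and $V_2$ are independent sets of $G$, they induce cliques in the complement $G':=\overline{G}$, which lies in $G_{m,n}$ and therefore has $m$ vertices of degree $m-1$ and $n$ vertices of degree $n-1$. Assume without loss of generality that $m\le n$ and $|V_1|\le|V_2|$.

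The first step is to bound the part sizes. Every vertex of $V_i$ has at least $|V_i|-1$ neighbors inside its clique in $G'$, so the maximum degree bound in $G'$ forces $|V_2|-1\le n-1$; combined with $|V_1|+|V_2|=m+n$ and $|V_1|\le|V_2|$, this yields the double inequality $m\le|V_1|\le|V_2|\le n$.

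I would then split into two cases. If $|V_1|>m$, then every vertex of $V_1\cup V_2$ has degree at least $|V_1|-1>m-1$ in $G'$, which would force all $m+n$ vertices of $G'$ to have degree $n-1$, contradicting the fact that $G'$ has only $n$ such vertices. If instead $|V_1|=m$ and $|V_2|=n$, then each vertex of $V_2$ already attains the maximum possible degree $n-1$ inside its clique, so $G'$ can contain no edges between $V_1$ and $V_2$. Consequently $V_1$ induces $K_m$ and $V_2$ induces $K_n$ as two separate components, giving $G'=K_m\cup K_n$, which is excluded from $G_{m,n}$ by definition---the desired contradiction.

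The main obstacle is simply identifying which part-size distributions are even consistent with the degree sequence; once those are pinned down, the degree sequence itself does the work, and no additional machinery (for instance, Theorem~\ref{bipart} or a direct triangle hunt) is needed beyond the duality between independent sets of $G$ and cliques of $\overline{G}$ together with the explicit exclusion of $K_m\cup K_n$ from $G_{m,n}$.
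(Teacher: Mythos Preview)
Your proof is correct and takes a different, more uniform route than the paper. The paper splits into two cases: for $m=n$ it applies the Caro--Wei bound (Theorem~\ref{caro}) to $\overline{G}$ to obtain $\alpha(\overline{G})\ge 3$, hence $\omega(G)\ge 3$, so $G$ contains a triangle; for $m\ne n$ it argues directly in $G$ that one part must have size exactly $m$ and the other exactly $n$, and the degree constraints then force $G=K_{m,n}$. You instead pass to the complement from the outset, turning the bipartition into two cliques of $G'\in G_{m,n}$, and let the clique sizes together with the degree sequence force $G'=K_m\cup K_n$ in every case at once. This avoids Caro--Wei and handles $m=n$ and $m\ne n$ in a single stroke (your Case~1 is vacuous when $m=n$, since $m\le|V_1|\le|V_2|\le n$ then forces $|V_1|=m$). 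The paper's approach buys an explicit triangle in the $m=n$ case, a slightly stronger structural conclusion; yours buys brevity and uniformity.

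One cosmetic remark: the phrasing of your Case~1 contradiction, ``$G'$ has only $n$ such vertices,'' tacitly uses $m<n$ (when $m=n$ there are $2n$ vertices of degree $n-1$). Since Case~1 cannot occur when $m=n$, this is harmless, but saying so explicitly would make the argument airtight on a first reading.
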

\proof 
Suppose first that $m=n$. Then, the complement of $G$, $\overline G$, is a graph on $2n$ vertices each with degree $n-1$, and $\overline G$ is different from the disjoint union $K_n \cup K_n$. By Caro-Wei's theorem (Theorem~\ref{caro}), given a graph $H$ with a degree sequence $d_1,d_2,\dots,d_r$, $\alpha(H) \geq \sum_{i=1}^r \frac{1}{d_i+1}$, with equality holding only when $H$ is a disjoint union of $r$ cliques, in which case $\alpha(H)=r$. Applying Caro-Wei's theorem to $\overline G$, we have $\alpha(\overline G)\geq \sum_{i=1}^{2n}\frac{1}{n}=2$; however, since $\overline G$ is not a disjoint union of two cliques, it follows that $\alpha(\overline G)\neq 2$, so $\alpha(\overline G) \geq 3$. Hence,  $\omega(G) \geq 3$, which means that $G$ contains a triangle and by Theorem \ref{bipart} it is not bipartite.

Now suppose that $m\neq n$, and without loss of generality, suppose $n>m$.  Then $G$ has $n$ vertices of degree $m$ and $m$ vertices of degree $n$. Suppose for contradiction that $G$ is bipartite with parts $V_1$ and $V_2$. Let $V_1$ have a vertex of degree $n$. Then $|V_2| \geq n$ and $|V_1| \leq m <n$. Then $V_2$ has no vertices of degree $n$, i.e., all vertices of $V_2$ are of degree $m$. Hence $G=K_{m,n}$, but this contradicts the assumption that $G$ is the complement of a graph which is different from the disjoint union of $K_m$ and $K_n$.
\qed

\subsection{Diameter}
\label{diameter}

In this section we show that the diameters of graphs in $G_{m,n}$ and $\overline G_{m,n}$ are bounded by small constants. 

\begin{proposition}
If $G \in G_{m,n}$, then $diam(G) \leq 4$. 
\label{diam(Gmn)}
\end{proposition}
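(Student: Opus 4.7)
My plan is to establish the bound $\mathrm{diam}(G)\leq 4$ by a short case analysis on which ``side'' of the degree partition the two endpoints lie. Assume without loss of generality that $m\leq n$, and write $V(G)=A\cup B$, where $A$ is the set of $m$ vertices of degree $m-1$ and $B$ is the set of $n$ vertices of degree $n-1$. For any pair $u,v\in V(G)$ with $uv\notin E$, I distinguish the cases $\{u,v\}\subseteq B$, $u\in A$ and $v\in B$, and $\{u,v\}\subseteq A$, and in each case I want to produce a short $u$--$v$ path.

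For the first two cases (at least one endpoint in $B$), the plan is a standard pigeonhole argument combined with connectedness. In both cases one checks that $\deg(u)+\deg(v)\geq (m-1)+(n-1)=|V\setminus\{u,v\}|$, so either $N(u)\cap N(v)\neq \varnothing$, immediately giving a common neighbor and $d(u,v)\leq 2$, or else $N(u)$ and $N(v)$ partition $V\setminus\{u,v\}$ exactly. In the latter situation, if no edge ran between $N(u)$ and $N(v)$, then $\{u\}\cup N(u)$ and $\{v\}\cup N(v)$ would be disjoint components of $G$, contradicting the connectedness of $G$ guaranteed by Proposition~\ref{prop00}. So such an edge must exist and $d(u,v)\leq 3$.

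The remaining case $u,v\in A$ with $uv\notin E$ is the genuine obstacle, because the degree sum $2(m-1)$ can be strictly less than $m+n-2$, so the pigeonhole trick above no longer forces $N(u)\cap N(v)\neq\varnothing$. The observation I would rely on is that if some $u\in A$ had all of its $m-1$ neighbors in $A$, then $N(u)\subseteq A\setminus\{u\}$ and a counting argument forces $N(u)=A\setminus\{u\}$, making $u$ adjacent to every other vertex of $A$ --- in particular to $v$, a contradiction. Hence $u$ has at least one neighbor $b\in B$, and applying the previous case to the pair $b\in B$, $v\in A$ gives $d(b,v)\leq 3$, so
\[
d(u,v)\leq d(u,b)+d(b,v)\leq 1+3=4.
\]
Assembling the three cases yields $\mathrm{diam}(G)\leq 4$, and the bound is tight, since the unique graph $P_5\in G_{2,3}$ has diameter exactly $4$.
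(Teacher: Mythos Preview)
Your proof is correct and follows essentially the same route as the paper's: the same case split (at least one endpoint of high degree versus both of low degree), the same pigeonhole-plus-connectedness argument in the first situation, and the same key observation in the second that a non-adjacent pair $u,v\in A$ forces each to have a neighbor in $B$. The only cosmetic difference is in the finish of the last case: the paper picks high-degree neighbors $x\in N(u)\cap B$ and $y\in N(v)\cap B$ and uses the strict inequality $2(n-1)>m+n-2$ (from $n>m$) to force $N(x)\cap N(y)\neq\varnothing$ directly, whereas you pick a single $b\in N(u)\cap B$ and recycle the mixed case for the pair $(b,v)$; both yield $d(u,v)\leq 4$.
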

\proof
Let $u$ and $v$ be vertices of $G=(V,E)\in G_{m,n}$, and suppose without loss of generality that $n\geq m$. We will show that $d(u,v)\leq 4$. If $u$ and $v$ are adjacent, then $d(u,v)=1$. Otherwise, we consider two cases.

{\em Case 1:} \ [$m=n$] or [$n>m$ and at least one of  $u$ and $v$ has degree $n-1$]. If 
$N(u)\cap N(v)\neq \emptyset$, then $d(u,v)=2$. If $N(u)\cap N(v)= \emptyset$, then $N(u) \cup \{u\} \cup N(v) \cup \{v\} = V$, and since  $G$ is connected, there must be adjacent vertices  $x \in N(u)$ and $y \in N(v)$. Then, $d(u,v)=3$.

{\em Case 2:} \ [$n>m$ and both $u$ and $v$ have degree $m-1$].  
If $N(u)\cap N(v)\neq \emptyset$, then $d(u,v)=2$. If $N(u)\cap N(v)= \emptyset$, then since $u$ and $v$ are not adjacent, each of them has no more than $m-2$ neighbors of degree $m-1$. 
Hence, both $u$ and $v$ have at least one neighbor of degree $n-1$. Let $x$ and $y$ be such neighbors of $u$ and $v$, respectively. If $x$ and $y$ are adjacent, then $d(u,v)=3$. 
If they are not adjacent, then since $n>m$, $|N(x)|+ |N(y)|=2n-2>m+n-2=|V\backslash \{x,y\}|\geq|N(x)\cup N(y)|$, so $|N(x)\cap N(y)|\neq \emptyset$.
Then, since $x$ and $y$ have a common neighbor, $d(u,v) = 4$.
\qed

\begin{proposition}
If $G \in \overline G_{m,n}$, then $diam(G) \leq 4$. 
\label{diam(complement(Gmn))}
\end{proposition}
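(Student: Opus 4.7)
The plan is to mirror the structure of the proof of Proposition~\ref{diam(Gmn)}: first dispose of the easy regimes with a neighborhood-intersection count, then use a counting argument over the degree-$n$ vertices for the one obstinate case. Throughout, I will use that $G \in \overline{G}_{m,n}$ has $m$ vertices of degree $n$ and $n$ vertices of degree $m$, on $m+n$ vertices in total.

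First I would dispatch $m=n$: then $G$ is $n$-regular on $2n$ vertices, and for any non-adjacent pair $u,v$ we have $|N(u)|+|N(v)|=2n > 2n-2 = |V\setminus\{u,v\}|$, forcing $N(u)\cap N(v)\neq \emptyset$ and $d(u,v)\le 2$. Hereafter assume without loss of generality that $n>m$, and let $u,v$ be arbitrary non-adjacent vertices. The guiding observation is that whenever $\deg(u)+\deg(v) > m+n-2 = |V\setminus\{u,v\}|$, the neighborhoods $N(u), N(v) \subseteq V\setminus\{u,v\}$ must intersect, giving $d(u,v)\le 2$. A brief case check shows this inequality covers every combination of vertex degrees except when both $u$ and $v$ have the smaller degree $m$ and in addition $n \ge m+2$; for $n=m+1$ the bound $2m > 2m-1$ still applies, handling that sub-case as well.

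So the only remaining case is $n \ge m+2$ with $u$, $v$ both non-adjacent and of degree $m$, and $A:=N(u)$ and $B:=N(v)$ disjoint. Set $C := V\setminus(\{u,v\}\cup A\cup B)$, so $|A|=|B|=m$ and $|C|=n-m-2 \ge 0$; note that any edge between $A$ and $B$ would yield $d(u,v)\le 3$, and any $w\in C$ adjacent to both $A$ and $B$ would yield $d(u,v)\le 4$. Assuming neither occurs, I would partition $C$ into $C_A$, $C_B$, $C_0$ according to whether a vertex has a neighbor in $A$ only, in $B$ only, or in neither, and ask where the $m\ge 2$ vertices of degree $n$ can possibly reside. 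In each of the five possible locations ($A$, $B$, $C_A$, $C_B$, $C_0$), a direct count of the largest allowable neighborhood shows it has size strictly less than $n$, so no degree-$n$ vertex can exist—contradicting the degree sequence of $G$.

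The tightest location is $w\in A$: its neighbors other than $u$ must lie in $(A\setminus\{w\})\cup C_A$, since edges to $B$ are forbidden by assumption and edges to $C_B\cup C_0$ would violate the defining property of the partition; this gives at most $(m-1)+|C_A|\le(m-1)+(n-m-2) = n-3$ such neighbors, short of the required $n-1$. The remaining four locations each open a gap of at least $3$ (for instance, $w\in C_0$ forces $n\le n-m-3$, impossible for $m\ge 1$). I expect the main obstacle to be not any single inequality but the bookkeeping of which edges each location forbids—in particular, verifying that a putative degree-$n$ vertex in $A$ cannot be adjacent to any vertex of $C_B$ or $C_0$ without violating the very partition of $C$ that these sets define.
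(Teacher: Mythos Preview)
Your argument is correct, but the paper's route in Case~2 is considerably shorter and worth noting. Instead of partitioning $V\setminus(\{u,v\}\cup A\cup B)$ and checking five possible locations for a degree-$n$ vertex, the paper simply picks \emph{any} vertex $x$ of degree $n$ and observes directly that $d(x,u)\le 2$: if $x\notin\{u\}\cup N(u)$ then $N(x),N(u)\subseteq V\setminus\{x,u\}$ and $|N(x)|+|N(u)|=n+m>m+n-2$, forcing $N(x)\cap N(u)\neq\emptyset$. The same count gives $d(x,v)\le 2$, whence $d(u,v)\le 4$ via the pivot $x$. This single neighborhood-intersection inequality replaces your entire five-location case analysis, and it does not even require the preliminary split into $n=m+1$ versus $n\ge m+2$. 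Your approach is the contrapositive unfolded into cases---assuming $d(u,v)\ge 5$ and showing every vertex then has degree $<n$---which works but carries more bookkeeping (in particular the verification you flagged, that a vertex in $A$ cannot touch $C_B\cup C_0$ without contradicting the partition). What your approach does buy is slightly more information: it shows that if $d(u,v)\ge 5$ then \emph{no} vertex has degree $n$, whereas the paper's pivot argument only needs the existence of one such vertex.
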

\proof
Let $u$ and $v$ be vertices of $G=(V,E)\in \overline{G}_{m,n}$, and suppose without loss of generality that $n\geq m$. We will show that $d(u,v)\leq 4$. If $u$ and $v$ are adjacent, then $d(u,v)=1$. Otherwise, we consider two cases.

{\em Case 1:} \ [$m=n$] or [$n>m$ and at least one of  $u$ and $v$ has degree $n$]. 
Since $|N(u)|+ |N(v)|\geq m+n>|V\backslash \{u,v\}|\geq|N(u)\cup N(v)|$, it follows that $|N(u)\cap N(v)|\neq \emptyset$. Thus, $u$ and $v$ have a common neighbor, so $d(u,v)=2$. 

{\em Case 2:} \ [$n>m$ and both $u$ and $v$ have degree $m$].
Let $x$ be any vertex of degree $n$. Then, since there are a total of $m+n$ vertices in $G$ and since $G$ is connected, $x$ must be adjacent to a vertex from $N(u) \cup \{ u \}$ and to a vertex  from $N(v) \cup \{ v \}$. Thus, $d(u,v)\leq 4$.
\qed
\medskip

\begin{figure}[ht]
    \centering
    \includegraphics[scale=.3]{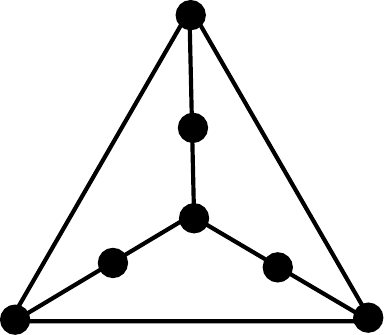}
    \caption{A graph in $G_{3,4}$ with diameter 2.}
    \label{fig_diam2}
\end{figure}
Note that there are graphs in $G_{m,n}$ with diameters 2, 3, and 4 (for example, those shown in Figures \ref{fig_diam2}, \ref{fig1}, and \ref{Fig33}, respectively). However, we have not found any graphs in $\overline{G}_{m,n}$ with diameter 4. We leave it as an open question to determine whether there are such graphs,  or whether $diam(G) \leq 3$ for all $G \in \overline G_{m,n}$.

\section{NP-hardness of problems on $G_{m,n}$ and $\overline G_{m,n}$}
\label{NP}
The properties discussed in the previous section are beneficial regarding possible practical applications as they make certain optimization problems easier to solve. On the other hand, some classical optimization problems are hard to solve on graphs in $G_{m,n}$ and $\overline G_{m,n}$. The next theorem provides an example of this.   
\begin{theorem}
{\sc Max Independent Set} is NP-hard on $G_{n,n}$.
\end{theorem}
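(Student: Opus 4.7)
The plan is to reduce from \textsc{Max Independent Set} on cubic (i.e., $3$-regular) graphs, which is a classical NP-hard problem. Given a cubic graph $H$ on $N$ vertices with $N$ sufficiently large (small instances are solvable in constant time), I will construct in polynomial time a graph $G \in G_{N,N}$ satisfying $\alpha(G)=\alpha(H)$.

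The construction uses two disjoint copies $V_1, V_2$ of $V(H)$; write $v_1, v_2$ for the two copies of a vertex $v$. Place a copy of $H$ on each of $V_1$ and $V_2$, and add a bipartite graph $B$ between them with edge set $\{v_1 u_2 : v\neq u \text{ and } vu\notin E(H)\}$. Since $H$ is cubic, each $v_1$ has $3$ neighbors in $V_1$ and exactly $(N-1)-3 = N-4$ neighbors in $V_2$, so $G$ is $(N-1)$-regular on $2N$ vertices; because $B$ is nonempty (as $H\neq K_N$ for $N\geq 5$), $G\neq K_N\sqcup K_N$, hence $G\in G_{N,N}$.

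The heart of the proof is to show $\alpha(G)=\alpha(H)$. For any independent set $I$ of $G$, set $A_i=\{v : v_i\in I\}$ for $i=1,2$. Then $A_1$ and $A_2$ are independent in $H$, and the design of $B$ forces the following: \emph{for any distinct $v\in A_1$, $u\in A_2$ we have $vu\in E(H)$} (otherwise $v_1 u_2$ would be a bipartite edge, contradicting the independence of $I$). Three cases arise. If one of $A_1,A_2$ is empty, then trivially $|I|\leq\alpha(H)$. If $A_1\cap A_2\neq\emptyset$, say $w$ is in both, then any other vertex in $A_1$ or $A_2$ would have to be $H$-adjacent to $w$, contradicting independence; so $A_1=A_2=\{w\}$ and $|I|=2$. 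If $A_1, A_2$ are both nonempty and disjoint, then, fixing any $v\in A_1$, every $u\in A_2$ lies in $N_H(v)$, so $|A_2|\leq 3$ by cubicity; symmetrically $|A_1|\leq 3$, giving $|I|\leq 6$. Combining the three cases yields $\alpha(G)\leq\max(\alpha(H),6)$, while $\alpha(G)\geq\alpha(H)$ is witnessed by placing a maximum independent set of $H$ entirely in $V_1$. Since Theorem~\ref{caro} applied to cubic $H$ gives $\alpha(H)\geq N/4$, we have $\alpha(H)\geq 6$ once $N\geq 24$, yielding $\alpha(G)=\alpha(H)$ and completing the reduction.

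The main obstacle is the case analysis for the upper bound on $\alpha(G)$, specifically the disjoint-both-nonempty sub-case: the key point is that the bipartite construction forces complete cross-adjacency between $A_1$ and $A_2$ in $H$, and this combined with $3$-regularity pins both shadows to at most three vertices, capping this sub-case at $6$. Everything else---regularity of $G$, membership in $G_{N,N}$, and the trivial lower bound on $\alpha(G)$---is routine bookkeeping.
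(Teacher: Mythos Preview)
Your proof is correct and in fact uses the \emph{same} construction as the paper: two copies of the input graph, with the complementary bipartite graph placed between them so that every vertex ends up with degree $N-1$. Where you diverge is in the analysis. The paper applies the construction to an arbitrary connected graph $H$ and does \emph{not} attempt to show $\alpha(G)=\alpha(H)$; instead it observes that the larger of the two shadows $I\cap V_1$, $I\cap V_2$ of a maximum independent set $I$ of $G$ is an independent set of $H$ of size at least $|I|/2$, and since trivially $\alpha(H)\le \alpha(G)$, a polynomial algorithm for $G_{n,n}$ would yield a $2$-approximation for \textsc{Max Independent Set} on general graphs. The paper then invokes H{\aa}stad's inapproximability theorem to conclude NP-hardness. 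Your route---restricting to cubic $H$ so that the cross-adjacency forced by the bipartite part caps the ``mixed'' case at $6$, and then using Caro--Wei to absorb that constant for $N\ge 24$---gives an exact Karp reduction and is entirely self-contained, avoiding the heavy PCP-based inapproximability machinery. The paper's argument is shorter but leans on a deep external result; yours is more elementary and yields the stronger statement that the optimum is preserved exactly.
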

\proof
Let $G=(V,E)$ with $|V|=n$ be a simple connected graph. We construct a graph $G'=(V',E')$ as follows. 
\begin{enumerate}
\item $V' = V_1' \cup V_2'$, 
where $V_1'=\{(x,1):x \in V \}$, $V_2'=\{(x,2):x \in V \}$. 
\item
$(x,1)$ is adjacent to $(y,1)$ and $(x,2)$ is adjacent to $(y,2)$ in $G'$ if and only if 
$x$ is adjacent to $y$ in $G$. 
\item
$(x,1)$ is adjacent to $(y,2)$ and $(x,2)$ is adjacent to $(y,1)$ in $G'$ if and only if 
$x$ is not adjacent to $y$ in $G$ for all $x,y$, $x \neq y$. 
\end{enumerate}
$G'$ consists of two copies of $G$ with  some vertices of the first copy connected to some vertices of the second copy so that all vertices have degree $n-1$. $G'$ can be constructed in  polynomial time, and $G'\in G_{n,n}$. 

Let $I$ be a maximum independent set of $G'$. The vertices of $I$ can be partitioned into $I_1$ and $I_2$, where $I_1=I \cap V_1'$ and $I_2=I\cap V_2'$. Let
\[
S=\begin{cases}
\{x\in V:(x,1)\in I_1\}, \text{ if } |I_1|\geq |I_2|\\
\{x\in V:(x,2)\in I_2\}, \text{ if } |I_2|\geq |I_1|.
\end{cases}
\]
Let $J$ be a maximum independent set of $G$. We will now show that $|S|$ is a 2-approximation for $|J|$.
Let $J_1=\{(x,1):x\in J\}$ be the copy of $J$ contained in $V_1'$.
Then, since $J_1$ is an independent set in $G'$ and $I$ is a maximum independent set in $G'$, we  have $|J|=|J_1|\leq |I|=|I_1|+|I_2|$. If $|I_1| \geq |I_2|$, then $|J| \leq 2|I_1|=2|S|$. Similarly, if $|I_1| \leq |I_2|$, then $|J|\leq 2|I_2|=2|S|$. In either case,  $|S|$ is a 2-approximation for $|J|$.
Since the maximum independent set problem cannot be approximated to a constant factor in polynomial time unless $P = NP$ \cite{hastad}, it follows that {\sc Max Independent Set} is NP-hard on $G_{n,n}$.
\qed

\medskip

Given a simple graph $G$, it is well-known that $Q$ is a clique in $G$, if and only if $Q$ is an independent set in $\overline G$, if and only if $V-Q$ is a vertex cover in $\overline G$. Thus, we have the following corollary.

\begin{corollary}
The {\sc Max Clique} and {\sc Max Vertex Cover} problems are NP-hard on $\overline G_{n,n}$.
\end{corollary}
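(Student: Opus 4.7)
The plan is to use the polynomial-time complement bijection $H \mapsto \overline H$ between $G_{n,n}$ and $\overline G_{n,n}$, combined with the two equivalences recalled just before the corollary, and reduce both claims to the NP-hardness of {\sc Max Independent Set} on $G_{n,n}$ established in the preceding theorem.

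For {\sc Max Clique} on $\overline G_{n,n}$, I would take any instance $H \in \overline G_{n,n}$, form $\overline H \in G_{n,n}$ in linear time, and apply the first stated equivalence to get $\omega(H) = \alpha(\overline H)$. Any polynomial-time algorithm for Max Clique on $\overline G_{n,n}$ would therefore yield one for Max Independent Set on $G_{n,n}$, contradicting the theorem. This step is essentially a direct rewriting and should take only a few lines.

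For the vertex cover claim on $\overline G_{n,n}$, I would invoke the second stated equivalence in the form $\tau(H) = |V(H)| - \alpha(H)$, which holds on every graph $H$ and makes the minimum vertex cover and maximum independent set problems polynomial-time equivalent when restricted to the same graph class. Via the complement bijection, the vertex cover problem on $\overline G_{n,n}$ then routes through {\sc Max Clique} on $G_{n,n}$.

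The main obstacle is that the preceding theorem gives NP-hardness of Max IS rather than Max Clique on $G_{n,n}$, so one extra step is needed to close the gap. My plan here is to rerun the construction from the proof of the theorem on the complement $\overline{H_0}$ of an arbitrary input graph $H_0$: applying the construction to $\overline{H_0}$ produces a graph $\widetilde G \in G_{n,n}$ whose maximum independent set $2$-approximates $\alpha(\overline{H_0}) = \omega(H_0)$, and since H\aa stad's inapproximability applies to {\sc Max Clique} on general graphs as well, this transfers the NP-hardness to the Max Clique side in the appropriate sense. Combined with the on-graph equivalence $\tau = |V| - \alpha$ and the complement bijection, this gives the NP-hardness of the vertex cover problem on $\overline G_{n,n}$.
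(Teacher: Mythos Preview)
Your reduction for {\sc Max Clique} on $\overline G_{n,n}$ is correct and is exactly the paper's intended argument: for $H\in\overline G_{n,n}$ one has $\omega(H)=\alpha(\overline H)$ with $\overline H\in G_{n,n}$, so the preceding theorem applies directly.

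For the vertex-cover half you correctly note that $\tau(H)=|V(H)|-\alpha(H)$ reduces {\sc Min Vertex Cover} on $\overline G_{n,n}$ to {\sc Max Independent Set} on $\overline G_{n,n}$, which by complementation is {\sc Max Clique} on $G_{n,n}$. You are right that this last statement is not literally the theorem; the paper simply asserts the corollary from the stated equivalences and does not isolate this extra step. However, your proposed way to close the gap does not work. Applying the theorem's construction to $\overline{H_0}$ produces $\widetilde G\in G_{n,n}$ whose \emph{independence number} $2$-approximates $\alpha(\overline{H_0})=\omega(H_0)$. That only reproves NP-hardness of {\sc Max Independent Set} on $G_{n,n}$ (now via inapproximability of {\sc Max Clique} on the input side); it says nothing about $\omega(\widetilde G)$ and hence nothing about {\sc Max Clique} on $G_{n,n}$, which is what your chain actually needs.

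The clean fix is to observe that the construction $G\mapsto G'$ in the theorem treats cliques exactly as it treats independent sets: inside each copy $V_1',V_2'$ the edges are those of $G$, so every clique of $G$ embeds as a clique of $G'$, and for a maximum clique $C$ of $G'$ the larger of $C\cap V_1'$ and $C\cap V_2'$ projects to a clique of $G$ of size at least $|C|/2$. The identical $2$-approximation argument together with H{\aa}stad's inapproximability then yields NP-hardness of {\sc Max Clique} on $G_{n,n}$, after which your chain $\tau(H)=|V|-\alpha(H)=|V|-\omega(\overline H)$ finishes the vertex-cover claim on $\overline G_{n,n}$.
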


\section{Conclusion}
\label{concl}

In this paper we studied various properties of  graphs that are degree equivalent to complete bipartite graphs or to disjoint unions of cliques. We showed that such graphs are connected and have a bounded diameter, and are therefore small-world networks. We characterized when these graphs are biconnected and bipartite, and showed that many of them have desirable qualities such as traceability, Hamiltonicity, and even pancyclicity. We also showed that some optimization problems are hard on these graphs. Below are several open questions and directions for future research.
\begin{enumerate}
\item[1)] Is every graph in $G_{m,n}$ traceable?
\item[2)] Is there a graph in $\overline{G}_{m,n}$ with diameter 4?
\item[3)] Can the obtained results be extended to multigraphs, and graphs that are degree-equivalent to more than two cliques or to complete multipartite graphs? 
\end{enumerate}

Regarding question 1), we have shown that all non-biconnected graphs and all bipartite graphs in $G_{m,n}$ are traceable. Moreover, all graphs in $G_{n,n}$ are traceable, as are all graphs in $G_{m,n}$ that can be obtained by joining two biconnected graphs by a twin bridge. Thus, we believe the answer to this question may be positive. Regarding question 2), we have searched many randomly generated graphs from $\overline{G}_{m,n}$ with a computer and have not found such a graph. Thus, we believe the answer to this question may be negative.
Regarding question~3), we note that many of the properties that hold for simple graphs in $G_{m,n}$ do not hold for multigraphs or for graphs in $G_{p_1,\dots,p_k}$ for $k>2$. For example, multigraphs that are degree-equivalent to two disjoint cliques are not always connected, and therefore are not always Hamiltonian or traceable. The same holds for simple graphs that are degree equivalent to three or more disjoint cliques. However, it would be interesting to obtain sufficient conditions that guarantee these properties.



\begin{thebibliography}{99}

\bibitem{agnarsson} 
Agnarsson, G. and R. Greenlaw, 
{\em Graph Theory. Modelling, Applications, and Algorithms}, Pearson, 2007



\bibitem{albertson}
Albertson, M.,
The irregularity of a graph, 
{\em ARS Combinatoria} {\bf 46} (1997) 219--225


\bibitem{alon}
Alon, N., Z. Friedland, and G. Kalai,
Regular subgraphs of almost regular graphs,
{\em Jpurnal of Combinatorial Theory}, Ser. B, {\bf 37} (1984) 79--91

\bibitem{Astarian}
Astarian, A.S., T.M.J. Denley, R. H\"aggkvist, Bipartite graphs and their applications, Cambridge Tracts in Mathematics, Vol. 131, Cambridge University Press, 1998


\bibitem{bondy}
Bondy, J. A., Pancyclic graphs I, Journal of Combinatorial Theory, Ser.  B, {\bf 11}(1) (1971) 80--84

\bibitem{BondyMurty}
Bondy, J.A. and U.S.R. Murty, {\em Graph Theory with Applications}, North Holland, New York Amsterdam Oxford, 1982




\bibitem{CaroWei}
Caro, Y., New result on the independence number, Technical report, Tel Aviv University (1979) 

\bibitem{dirac}
Dirac, A.G., Some theorems on abstract graphs, {\em Proc. London Mathematical Society}, s3-2(1)69--81 (1952)

\bibitem{erdos}
Alavi, Y., G. Chartrand, F.R.K. Chung, P. Erd\H os, R.L. Graham, and O.R. Oellermann,
Highly irregular graphs,
{\em Journal of Graph Theory} {\bf 11}(2) (1987) 235--249

\bibitem{erdos2} 
Chartrand, G., P. Erd\H os, and O.R. Oellermann, 
How to define an irregular graph,
{\em The College Mathematics Journal} {\bf 19} (1988) 36--42

\bibitem{Fulkerson} 
Fulkerson, D.R., A.J. Hoffman, and M.H. McAndrew,
Some properties of graphs with multiple edges,
{\em Canadian Journal of Mathematics} {\bf 17} (1965) 166--177


\bibitem{GJ}
Garey, M., D. Johnson, 
{\em Computers and Intractability}, 
W.H. Freeman \& Company, San Francisco, 1979








\bibitem{hastad}
H{\aa}stad, J.,
Clique is hard to approximate within $n^{1-\epsilon}$, 
{\em Acta Mathematica} {\bf 182}(2) (1999) 105–142 

\bibitem{holton_sheehan}
Holton, D.A., J. Sheehan, {\em The Petersen Graph}, Cambridge University Press, 1993 (p. 32)

\bibitem{joentgen}
Joentgen, A. and L. Volkmann,
Factors of locally almost regular graphs,
{\em Bulletin of the London Mathematical Society} {\bf 23} (1991) 121--122


\bibitem{konig}
Konig, D.,  {\"U}ber Graphen und ihre Anwendung auf Determinantentheorie
und Mengenlehre, {\em Mathematische Annalen} {\bf 77}(4) (1916) 453--465



\bibitem{macon}
Macon, L.F.,
Almost regular graphs and edge-face colorings of plane graphs,
Ph.D. Thesis, Univ. of Central Florida, FL, 2009

\bibitem{mantel}
Mantel, W.,
Problem 28, 
{\em Wiskundige Opgaven} {\bf 10} (1907) 60--61

\bibitem{MoonMoser}
Moon, J., L. Moser,
On Hamiltonian bipartite graphs,
{\em Israel Journal of Mathematics} {\bf 1} (1963) 163--165 

\bibitem{nelson}
Nelson, P., A.J. Radcliffe,
Semi-regular graphs of minimum independence number,
{\em Discrete Mathematics} {\bf 275} (2004) 237--263

\bibitem{nelson2}
Nelson, P., 
Minimum independence number of graphs with specified degree sequence,
Ph.D. Thesis, University of Nebraska-Lincoln, NE, 2001  

\bibitem{ore1}
Ore, O., Note on Hamilton Circuits, 
{\em The American Mathematical Monthly} {\bf 67}(1)  (1960) p. 55 

\bibitem{rahman-Kaykobad}
Rahman, M.S., M. Kaykobad, On Hamiltonian cycles and Hamiltonian paths, Information Processing Letters {\bf 94} (2005) 37--41



 



\end{thebibliography}
\end{document}